\documentclass[a4paper,12pt]{article}
\usepackage[english]{babel}
\usepackage{tikz}
\usetikzlibrary{matrix,arrows,decorations.markings}
\usepackage{amsmath,amsfonts,amssymb,amsthm,url,textcomp}
\usepackage{csquotes}
\usepackage{a4wide}
\usepackage[numbers]{natbib}
\usepackage{fancyhdr}
\pagestyle{fancy}
\lhead{Alexander Bors}
\rhead{Automorphic word maps}
\usepackage{anyfontsize}
\usepackage{hyperref}
\usepackage{hhline}
\usepackage{leftidx}

\allowdisplaybreaks

\newtheorem{theorem}{Theorem}\numberwithin{theorem}{section}
\newtheorem{definition}[theorem]{Definition}
\newtheorem{lemma}[theorem]{Lemma}

\newtheorem{conjecture}[theorem]{Conjecture}
\newtheorem{theoremm}{Theorem}\numberwithin{theoremm}{subsection}
\newtheorem{deffinition}[theoremm]{Definition}
\newtheorem{lemmma}[theoremm]{Lemma}

\newtheorem{nottation}[theoremm]{Notation}

\numberwithin{theoremmm}{subsubsection}

\theoremstyle{remark}
\newtheorem{remark}[theorem]{Remark}
\newtheorem{example}[theorem]{Example}

\newcommand{\Rad}{\operatorname{Rad}}
\newcommand{\Aut}{\operatorname{Aut}}
\newcommand{\Alt}{\mathcal{A}}

\newcommand{\Sym}{\mathcal{S}}

\newcommand{\A}{\operatorname{A}}

\newcommand{\C}{\operatorname{C}}

\newcommand{\Soc}{\operatorname{Soc}}
\newcommand{\Inn}{\operatorname{Inn}}
\newcommand{\id}{\operatorname{id}}

\newcommand{\e}{\mathrm{e}}

\newcommand{\GL}{\operatorname{GL}}

\newcommand{\G}{\mathcal{G}}

\newcommand{\D}{\operatorname{D}}

\newcommand{\Mat}{\operatorname{Mat}}

\renewcommand{\P}{\mathfrak{P}}

\newcommand{\B}{\operatorname{B}}

\renewcommand{\t}{\operatorname{t}}

\newcommand{\ind}{\operatorname{ind}}
\newcommand{\p}{\mathfrak{p}}
\newcommand{\conj}{\operatorname{conj}}
\newcommand{\IN}{\mathbb{N}}
\newcommand{\Hcal}{\mathcal{H}}
\newcommand{\Sp}{\operatorname{Sp}}
\newcommand{\perm}{\mathrm{perm}}
\newcommand{\res}{\operatorname{res}}
\newcommand{\aut}{\operatorname{aut}}
\newcommand{\IF}{\mathbb{F}}
\newcommand{\Stab}{\operatorname{Stab}}
\newcommand{\Span}{\operatorname{Span}}
\newcommand{\End}{\operatorname{End}}
\newcommand{\GU}{\operatorname{GU}}

\begin{document}

\title{Fibers of automorphic word maps and an application to composition factors}

\author{Alexander Bors\thanks{University of Salzburg, Mathematics Department, Hellbrunner Stra{\ss}e 34, 5020 Salzburg, Austria. \newline E-mail: \href{mailto:alexander.bors@sbg.ac.at}{alexander.bors@sbg.ac.at} \newline The author is supported by the Austrian Science Fund (FWF):
Project F5504-N26, which is a part of the Special Research Program \enquote{Quasi-Monte Carlo Methods: Theory and Applications}. \newline 2010 \emph{Mathematics Subject Classification}: Primary: 20D05, 20D06, 20D45. Secondary: 15A03, 15A04, 20B30. \newline \emph{Key words and phrases:} Finite groups, Word maps, Composition factors, Finite simple groups.}}

\date{\today}

\maketitle

\abstract{In this paper, we study the fibers of \enquote{automorphic word maps}, a certain generalization of word maps, on finite groups and on nonabelian finite simple groups in particular. As an application, we derive a structural restriction on finite groups $G$ where, for some fixed nonempty reduced word $w$ in $d$ variables and some fixed $\rho\in\left(0,1\right]$, the word map $w_G$ on $G$ has a fiber of size at least $\rho|G|^d$: No sufficiently large alternating group and no (classical) simple group of Lie type of sufficiently high rank can occur as a composition factor of such a group $G$.}

\section{Introduction}\label{sec1}

\subsection{Motivation and main result}\label{subsec1P1}

Word maps on groups have been studied intensely in recent years, resulting in substantial progress on interesting questions and a beautiful theory using tools from various areas such as representation theory and algebraic geometry; interested readers are referred to the survey article \cite{Sha13a}.

Recall that a (reduced) word $w$ in $d$ variables $X_1,\ldots,X_d$ is an element of the free group $F(X_1,\ldots,X_d)$. Each such word gives, for each group $G$, rise to a word map $w_G:G^d\rightarrow G$ induced by substitution. Studying the fibers of $w_G$ means studying the solution sets in $G^d$ to equations of the form $w=w(X_1,\ldots,X_d)=g$ for $g\in G$. By Larsen and Shalev's result \cite[Theorem 1.1]{LS12a}, for fixed $w$, the maximum number of solutions to such an equation in a nonabelian finite simple group $S$ is in $o(|S|^d)$ as $|S|\to\infty$. In particular, for each fixed number $\rho\in\left(0,1\right]$, for only finitely many nonabelian finite simple groups $S$, $w_S$ has a fiber of size at least $\rho|S|^d$.

Based on this, it is near-lying to ask what one can say more generally about the nonabelian composition factors of a finite group $G$ where the word map $w_G$ has a fiber of size at least $\rho|G|^d$. In order to be able to use \cite[Theorem 1.1]{LS12a} for this, it would be useful if one could somehow relate the maximum fiber size of $w_G$ with the maximum fiber sizes of the word maps associated with $w$ over the composition factors of $G$. For example, it would be nice to have an inequality of the form $\Pi_w(G)\leq\Pi_w(N)\cdot\Pi_w(G/N)$ for all finite groups $G$ and all normal subgroups $N$ of $G$, where $\Pi_w$ denotes the function that maps each finite group $G$ to the maximum fiber size of $w_G$. Unfortunately, this is not the case, even if we assume that $N$ is characteristic in $G$; consider, for example, $G=\D_{2o}$, the dihedral group of order $2o$, for some odd integer $o\geq 3$, $N$ the unique cyclic subgroup of index $2$ in $G$, and $w=X_1^2$.

In this paper, we will describe a way to circumvent these difficulties and provide some strong restrictions on possible composition factors of a finite group $G$ such that $\Pi_w(G)\geq\rho|G|^d$ in the form of Theorem \ref{wordMapTheo} below. First, we introduce some constants:

\begin{nottation}\label{mNot}
Let $w$ be a reduced word of length $l\geq1$ in $d$ distinct variables. We introduce the following constant, depending only on $w$:

\[
M=M(d,l):=1+2l(d+1)+(2l(d+1))^2+\cdots+(2l(d+1))^{2l+2}=\frac{(2l(d+1))^{2l+3}-1}{2l(d+1)-1}.
\]

Furthermore, we set $M':=M(l,l)$.
\end{nottation}

Our main result is the following (as usual, the \enquote{untwisted Lie rank} of a Lie type group is the Lie rank of the corresponding untwisted group):

\begin{theoremm}\label{wordMapTheo}
Let $w$ be a reduced word of length $l\geq1$ in $d$ distinct variables. Then for all $\rho\in\left(0,1\right]$ and all finite groups $G$ such that the word map $w_G$ has a fiber of size at least $\rho|G|^d$, the following hold:

\begin{enumerate}
\item No alternating group of order larger than

\[
\max\{\lceil 256l^{16}\e^{16M'l-2}\rceil!,\rho^{-16M'}\}
\]

is a composition factor of $G$.
\item No (classical) simple group of Lie type of untwisted Lie rank larger than

\[
\max\{72(l+1)^2l^2,\sqrt{72(l+1)^2l^2\log_2(\rho^{-1})}\}
\]

is a composition factor of $G$.
\end{enumerate}
\end{theoremm}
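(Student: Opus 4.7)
My plan is to bootstrap from the automorphic-word-map machinery that the paper develops prior to this theorem. The intended substitute for the failed submultiplicativity $\Pi_w(G)\le\Pi_w(N)\cdot\Pi_w(G/N)$ should be an inequality of the shape: for $N\trianglelefteq G$, a fiber-density lower bound for $w_G$ implies fiber-density lower bounds for certain \emph{automorphic} word maps on $N$ and on $G/N$, at the cost of inflating the underlying word data in a controlled way. The constants $M$ and $M'$ from Notation~\ref{mNot} are clearly designed to absorb this inflation when iterated along a composition series of $G$.

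The first main step is therefore to iterate this submultiplicative-type inequality through a chief series of $G$. This should yield, for every nonabelian composition factor $S$ of $G$, some automorphic word map on $S$—the underlying word having length and number of variables bounded in terms of $l$ and $d$ via $M, M'$—with a fiber of size at least $\rho^{c}|S|^d$ for an exponent $c=\Oo(M')$, matching the exponent $16M'$ that surfaces explicitly in the statement.

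The second step converts each such automorphic fiber-density statement on a nonabelian simple $S$ into an ordinary fiber-density statement on a bounded-index overgroup such as $\Aut(S)$, by absorbing the twisting automorphisms into additional variables of a suitable word; this costs another multiplicative factor that $M'$ was engineered to swallow. For $S=A_n$ one then invokes \cite[Theorem 1.1]{LS12a}, whose quantitative decay rate for $\Pi_w(A_n)/|A_n|^d$ in $n$ and $l$, after plugging in the inherited density $\rho^{c}$, yields the bound $\max\{\lceil 256l^{16}\e^{16M'l-2}\rceil!,\rho^{-16M'}\}$. For classical Lie-type $S$ of untwisted rank $r$, the analogous quantitative Larsen--Shalev-type bound (giving exponential decay in $r$) delivers the stated rank bound $\max\{72(l+1)^2l^2,\sqrt{72(l+1)^2l^2\log_2(\rho^{-1})}\}$ after elementary rearrangement.

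The principal obstacle will be the iteration step: one must ensure that the inflation of the word data produced at each passage through a normal subgroup is bounded solely in terms of the \emph{original} $l$ and $d$, rather than in terms of the length of the composition series or the individual factors. The explicit form of $M, M'$—a geometric sum of powers of $2l(d+1)$ with top exponent $2l+2$—strongly suggests that the recursion can be organized so as to terminate after a bounded number of essentially different steps (one per combinatorial type of automorphic transformation of a reduced word of length $\leq l$), and making this termination uniform is the crux of the argument. Once this bookkeeping is in place, both assertions of the theorem follow by plugging the inherited density into the appropriate Larsen--Shalev-type quantitative bound.
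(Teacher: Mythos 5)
There is a genuine gap, and it starts with the premise on which your whole bookkeeping rests. In the paper's argument there is \emph{no} inflation of the word data when passing through a normal subgroup: Lemma \ref{mainLem} gives $\P_w^{(A)}(G)\leq\P_w^{(\ind(A))}(G/N)\cdot\P_w^{(\res(A))}(N)$ for the \emph{same} word $w$, because the conjugating elements produced by rearranging the product are absorbed into the automorphisms (this is exactly what automorphic word maps are for), and because the largest fiber always lies over the identity. Consequently each characteristic composition factor $F_i=S_i^{n_i}$ inherits density at least $\rho$ itself, not $\rho^{c}$ with $c=\Oo(M')$; the only place the word data changes is the single descent from $S^n$ to $S$ (Lemma \ref{variationLem}), where $w$ is replaced by a \enquote{variation} of the same length $l$ in at most $l$ variables --- that, and nothing about iterated inflation, is why $M'=M(l,l)$ appears. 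The constant $M$ is not a recursion-absorbing device either: it is the cardinality bound for a ball of radius $2l+2$ in a graph of degree $2l(d+1)$ arising in the symmetric-group counting argument, and the exponent $16M'$ in $\rho^{-16M'}$ is the reciprocal of the fiber-decay exponent $1/(16M')$ from Theorem \ref{simpleGroupsTheo}(1), not an accumulated power of $\rho$. Your proposal never supplies the actual mechanism for the submultiplicative step (it is only posited \enquote{in shape}), and the mechanism it gestures at would reintroduce exactly the dependence on the length of the composition series that the automorphic formalism eliminates.

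The second step is where the plan would fail outright. You cannot convert an automorphic fiber-density statement on a simple group $S$ into an ordinary word-map statement on a bounded-index overgroup and then quote \cite[Theorem 1.1]{LS12a}: that theorem concerns nonabelian \emph{simple} groups, whereas $\Aut(S)$ and the isometry/collineation groups are not simple; \enquote{absorbing the twisting automorphisms into additional variables} turns a statement about the fiber of a map with \emph{fixed} automorphisms into a statement about a fiber restricted to a coset slice, which Larsen--Shalev do not bound; and for Lie-type groups the field automorphisms are not word values in $S$ at all, so they cannot be encoded as extra variables. Moreover \cite[Theorem 1.1]{LS12a} is not effective as stated, so the explicit constants $\lceil 256l^{16}\e^{16M'l-2}\rceil!$ and $72(l+1)^2l^2$ cannot be extracted from it as a black box. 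The paper instead reworks Larsen and Shalev's combinatorial constructions from scratch for automorphic word maps --- on symmetric groups (via the rewriting $\alpha(g)(x)=y\Leftrightarrow g(\t(\alpha)(x))=\t(\alpha)(y)$ and a metric whose generator set includes the $\t(\alpha_i)^{\pm1}$, whence $2l(d+1)$ and $M$) and on the isometry groups, with a separate argument for $\GL_n(q)$ because of the inverse-transpose automorphism --- and only then transfers \emph{down} to $\Alt_n$ and the simple classical groups via Lemma \ref{similarGroupsLem} and Theorem \ref{alternativeTheo}. This reworking is the substantive content of the result and cannot be bypassed in the way you propose; it is also why the bounded-rank Lie-type case remains open and is absent from the statement, a restriction your outline does not account for.
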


In other words, the list of potential composition factors for such a group $G$ consists of finitely many alternating groups, the sporadic groups, and all simple Lie type groups of bounded rank.

\subsection{Main ideas and overview of the paper}\label{subsec1P2}

The main idea for proving Theorem \ref{wordMapTheo} is to make up for the above mentioned \enquote{flaw} of the function $\Pi_w$ by replacing it by an evaluation-wise larger function $\P_w$ which satisfies the inequality $\P_w(G)\leq\P_w(N)\cdot\P_w(G/N)$ at least when $N$ is characteristic in $G$ and study $\P_w$ instead. To this end, we generalize the notion of a word map in a certain way.

Let us first fix some notation. For a fixed reduced word $w$ in the $d$ variables $X_1,\ldots,X_d$ and of length $l$, write $w=x_1^{\epsilon_1}\cdots x_l^{\epsilon_l}$, where $x_1,\ldots,x_l\in\{X_1,\ldots,X_d\}$ and $\epsilon_i=\pm 1$. Denote by $\iota$ the unique function $\{1,\ldots,l\}\rightarrow\{1,\ldots,d\}$ such that for $i=1,\ldots,l$, $x_i=X_{\iota(i)}$. Thus for each group $G$, the word map $w_G$ is just the map $G^d\rightarrow G$ sending $(g_1,\ldots,g_d)\mapsto g_{\iota(1)}^{\epsilon_1}\cdots g_{\iota(l)}^{\epsilon_l}$.

\begin{deffinition}\label{automorphicDef}
We introduce the following terminology and notation:

\begin{enumerate}
\item With notation as above, let $G$ be a group, and let $\alpha_1,\ldots,\alpha_l$ be automorphisms of $G$. The \emph{automorphic word map $w_G^{(\alpha_1,\ldots,\alpha_l)}$} is the map $G^d\rightarrow G$ sending $(g_1,\ldots,g_d)\mapsto \alpha_1(g_{\iota(1)})^{\epsilon_1}\cdots\alpha_l(g_{\iota(l)})^{\epsilon_l}$.
\item By $\P_w$, we denote the function that maps each finite group $G$ to the maximum size of a fiber of one of the automorphic word maps $w_G^{(\alpha_1,\ldots,\alpha_l)}$, $\alpha_1,\ldots,\alpha_l$ automorphisms of $G$.
\end{enumerate}
\end{deffinition}

Hence $w_G^{(\alpha_1,\ldots,\alpha_l)}$ is like $w_G$, except that in the $i$-th factor of the $l$ factor product as which the evaluation $w_G(g_1,\ldots,g_d)$ is defined, we additionally apply $\alpha_i$, one of $l$ automorphisms fixed beforehand. In particular, $w_G^{(\id,\ldots,\id)}=w_G$.

The approach of studying fibers of automorphic word maps will actually allow us to prove the following stronger form of Theorem \ref{wordMapTheo}:

\begin{theoremm}\label{mainTheo}
Let $w$ be a reduced word of length $l\geq1$ in $d$ distinct variables and $M=M(w)$ as in Notation \ref{mNot}. Then for all $\rho\in\left(0,1\right]$ and all finite groups $G$ with $\P_w(G)\geq\rho|G|^d$, the following hold:

\begin{enumerate}
\item No alternating group of order larger than

\[
\max\{\lceil 256l^{16}\e^{16M'l-2}\rceil!,\rho^{-16M'}\}
\]

is a composition factor of $G$.
\item No (classical) simple group of Lie type of untwisted Lie rank larger than

\[
\max\{72(l+1)^2l^2,\sqrt{72(l+1)^2l^2\log_2(\rho^{-1})}\}
\]

is a composition factor of $G$.
\end{enumerate}
\end{theoremm}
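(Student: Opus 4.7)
The overall strategy is to reduce the problem to a bound on $\P_w(S)$ for each nonabelian finite simple composition factor $S$ of $G$, using the cleaner quotient behaviour of $\P_w$ (compared to $\Pi_w$) as the main engine. The key advantage of working with automorphic word maps is that whenever $N$ is characteristic in $G$, each automorphism $\alpha_i$ of $G$ restricts to an automorphism of $N$ and descends to an automorphism $\bar\alpha_i$ of $G/N$, so the induced automorphic word map $w_{G/N}^{(\bar\alpha_1,\ldots,\bar\alpha_l)}$ is well-defined.

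The first step is a submultiplicative lemma: for every finite group $G$ and every characteristic subgroup $N\trianglelefteq G$,
\[
\P_w(G)\leq\P_w(N)\cdot\P_w(G/N).
\]
To prove this I would fix automorphisms $\alpha_1,\ldots,\alpha_l$ of $G$ and an element $g\in G$ achieving $\P_w(G)$, and count the fiber $F$ over $g$ in two stages. First, the image of $F$ in $(G/N)^d$ lies in a fiber of the induced automorphic word map on $G/N$, contributing at most $\P_w(G/N)$. Second, within each coset-tuple $(g_1 N,\ldots,g_d N)$ in that image, the tuples $(g_1 n_1,\ldots,g_d n_d)\in F$ correspond, after expanding with the commutator identities inside the normal subgroup $N$, to solutions in $N^d$ of an automorphic word equation over $N$; the automorphisms appearing there are compositions of the restrictions of the $\alpha_i$ with inner automorphisms coming from conjugation by (powers of) the $g_j$, and hence are genuine automorphisms of $N$. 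Thus the contribution per coset is at most $\P_w(N)$.

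Next, I would iterate this lemma along a chain of characteristic subgroups of $G$ to isolate a single nonabelian composition factor. Each chief factor is characteristically simple, hence of the form $S^k$ for $S$ simple (or abelian, in which case an elementary fiber bound handles it and gets absorbed into $|G|^d$). A careful bookkeeping of the resulting inequality produces a characteristically simple quotient $S^k$ satisfying $\P_w(S^k)\geq \rho'|S^k|^d$ with $\rho'$ an explicit power of $\rho$. A further reduction from $\P_w(S^k)$ to $\P_w(S)$, exploiting that $\Aut(S^k)=\Aut(S)\wr\Sym_k$ and that the factor-permuting part only redistributes contributions across the $k$ copies of $S$ in a controlled way, yields $\P_w(S)\geq \rho''|S|^d$, still with $\rho''$ a bounded power of $\rho$.

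The main obstacle is the remaining bound on $\P_w(S)$ for $S$ a nonabelian finite simple group: one must show that for $S=\Alt_n$ with $n$ beyond the stated threshold, and for classical Lie type $S$ of sufficiently large untwisted rank, $\P_w(S)$ is strictly smaller than any prescribed positive fraction of $|S|^d$. This is an automorphic refinement of \cite[Theorem 1.1]{LS12a}. Writing the fiber size of $w_S^{(\alpha_1,\ldots,\alpha_l)}$ at a given element as a character sum, the automorphism twists act on $\Irr(S)$ by a permutation, so the Larsen--Shalev character-value estimates apply essentially verbatim; for alternating groups this produces a bound of the shape $\P_w(\Alt_n)\leq c\,|\Alt_n|^{d-1/M'}$, with the $\rho^{-16M'}$ threshold arising on inversion and the factorial threshold $\lceil 256l^{16}\e^{16M'l-2}\rceil!$ coming from ensuring one is in the regime where the Larsen--Shalev bound is valid. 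For Lie type groups the Landazuri--Seitz / Liebeck--Shalev lower bound on the minimal nontrivial character degree, which is exponential in the untwisted rank, forces the rank to be at most the stated quantity. Combining these simple-group estimates with the reduction above yields Theorem \ref{mainTheo}.
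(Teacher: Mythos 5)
Your overall skeleton does match the paper's: the submultiplicativity $\P_w(G)\leq\P_w(N)\cdot\P_w(G/N)$ for characteristic $N$ (proved exactly by your coset-wise counting with inner twists, this is Lemma \ref{mainLem}), the passage to characteristic composition factors $S^n$, a reduction to a single simple factor, and then a Larsen--Shalev-type fiber bound for automorphic word maps on large alternating and high-rank classical groups. However, your proposal has a genuine gap at the central step, the bound on $\P_w(S)$ for simple $S$. There is no character-sum formula for the fibers of a \emph{general} reduced word: Frobenius-type formulas exist only for special words (commutators, products of powers in distinct variables), and \cite[Theorem 1.1]{LS12a} is \emph{not} proved by character estimates at all -- it is proved by a combinatorial counting argument for alternating groups and high-rank classical groups, plus algebraic geometry in bounded rank. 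So the claim that "the automorphism twists act on $\Irr(S)$ by a permutation, so the Larsen--Shalev character-value estimates apply essentially verbatim" does not connect to anything that proves the needed estimate; even in settings where character formulas do exist, outer twists lead to characters of extensions, not merely a permutation of $\Irr(S)$. What the paper actually does (Section \ref{sec3}) is redo the Larsen--Shalev counting argument in the twisted setting: one first passes from $S$ to a closely related group ($\Sym_n$, resp.\ the full isometry group) so that all automorphisms of $S$ are realized inside a manageable group $\A(G)$ (Lemma \ref{similarGroupsLem}); one then rewrites each mapping condition $\alpha(g)(x)=y$ as a condition on $g$ itself via the permutations/semilinear maps $\t(\alpha)$ (Lemma \ref{mappingLem}), and $\GL_n(q)$ requires a separate argument because of the inverse-transpose automorphism (Lemmas \ref{rewritingLem}, \ref{substituteLem}). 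This adaptation is the real content of the theorem and is missing from your sketch.

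A second, smaller gap is in your reduction from $S^k$ to $S$. Because $\Aut(S^k)=\Aut(S)\wr\Sym_k$ and the permutation parts of the $l$ automorphisms differ from letter to letter, each coordinate equation is an automorphic word equation not for $w$ but for a \emph{variation} of $w$: a word of the same length $l$ in which different occurrences of the same variable may become distinct variables (up to $l$ of them). For instance, $w=X_1^2$ on $S^2$ with one coordinate-swapping automorphism yields coordinate equations of the shape $s_{1,1}s_{1,2}=g$, whose fibers behave like those of $X_{1,1}X_{1,2}$, not of squaring. So you cannot conclude $\P_w(S)\geq\rho''|S|^d$ for the original $w$; you only get $\p_{w'}(S)\geq\rho$ for some variation $w'$ (this is Lemma \ref{variationLem} in the paper), and this is precisely why the stated thresholds involve $M'=M(l,l)$ and $72(l+1)^2l^2$ rather than quantities depending on $d$. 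Also note that the iteration over a characteristic series loses nothing: since all proportions lie in $\left(0,1\right]$, one gets $\p_w(F_i)\geq\rho$ for \emph{each} characteristic factor directly, with no passage to a power $\rho'$ of $\rho$.
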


We now give an overview of the rest of this paper:

\begin{enumerate}
\item In Section \ref{sec2}, we prove our main lemma, Lemma \ref{mainLem}, which includes the inequality $\P_w(G)\leq\P_w(N)\cdot\P_w(G/N)$ for characteristic subgroups $N$ of $G$. It also includes the observation that the element of $G$ having the largest fiber size under any automorphic word map on $G$ is the identity element of $G$.
\item Having gained a basic understanding of automorphic word maps in Section \ref{sec2}, the next goal is to extend, as far as necessary, Larsen and Shalev's result \cite[Theorem 1.1]{LS12a} on fibers of word maps on nonabelian finite simple groups mentioned above to fibers of automorphic word maps. This will be done in Section \ref{sec3}, see Theorem \ref{simpleGroupsTheo}.
\item Section \ref{sec4} consists of the proof of Theorem \ref{mainTheo} based on the results developed so far.
\item Finally, in Section \ref{sec5}, we give some concluding remarks concerning further extensions of Larsen and Shalev's techniques to automorphic word maps and an interesting consequence thereof.
\end{enumerate}

\subsection{Notation}\label{subsec1P3}

We denote by $\IN$ the set of natural numbers (including $0$) and by $\IN^+$ the set of positive integers. Euler's constant is denoted by $\e$, which is to be distinguished from the variable $e$. The image and preimage of a set $M$ under a function $f$ are denoted by $f[M]$ and $f^{-1}[M]$ respectively. When $f_i:X_i\rightarrow Y_i$ for $i=1,\ldots,n$, then we denote by $f_1\times\cdots\times f_n$ the \emph{product of the maps $f_i$}, i.e., the map $\prod_{i=1}^n{X_i}\rightarrow\prod_{i=1}^n{Y_i},(x_1,\ldots,x_n)\mapsto(f_1(x_1),\ldots,f_n(x_n))$. The $n$-fold product of a map $f$ with itself is denoted by $f^{(n)}$. These last two notations will be used in the proofs of Lemma \ref{variationLem} and of the implication \enquote{Conjecture \ref{larShaEffConj}$\Rightarrow$ Conjecture \ref{radicalConj}} in Section \ref{sec5}.

For a group $G$ and an element $g\in G$, we denote by $\conj(g)$ the conjugation by $g$ on $G$, i.e., the inner automorphism of $G$ of the form $x\mapsto gxg^{-1}$. The automorphism group of $G$ is denoted by $\Aut(G)$, and the inner automorphism group of $G$ by $\Inn(G)$. For a finite set $X$, we denote by $\Sym_X$ the symmetric group on $X$; for a positive integer $n$, $\Sym_n$ and $\Alt_n$ denote the symmetric and alternating group on $\{1,\ldots,n\}$ respectively.

For a prime power $q$, the finite field with $q$ elements is denoted by $\IF_q$. For $n\in\IN^+$ and a prime power $q$, $\Mat_n(q)$ denotes the ring of $(n\times n)$-matrices over $\IF_q$. For a vector space $\Delta$ over some field $F$, we denote by $\End_F(\Delta)$ the endomorphism ring of $\Delta$, i.e., the ring of $F$-linear maps $\Delta\rightarrow\Delta$.

At some points in our arguments, we will not consider all possible automorphic word maps $w_G^{(\alpha_1,\ldots,\alpha_l)}$ over some finite group $G$, but only those where the $\alpha_i$ are from a certain subset of $\Aut(G)$. Also, we sometimes want to talk about the maximum fiber size of a particular element of $G$ under an automorphic word map or about the proportion of a fiber of an (automorphic) word map associated with $w$ within the entire argument set $G^d$, rather than the actual size of the fiber. We therefore introduce the following notation that supplements the notation already introduced:

\begin{nottation}\label{pANot}
Let $G$ be a finite group, $w$ a reduced word of length $l$ in $d$ distinct variables, $A\subseteq\Aut(G)$.

\begin{enumerate}
\item We set $\pi_w(G):=\Pi_w(G)/|G|^d$ and $\p_w(G):=\P_w(G)/|G|^d$. Note that always $\pi_w(G),\p_w(G)\in\left(0,1\right]$.
\item We denote by $\P_w^{(A)}(G,g)$ the maximum size of the fiber of $g$ under an automorphic word map of the form $w_G^{(\alpha_1,\ldots,\alpha_l)}$, where $\alpha_i\in A$ for $i=1,\ldots,l$, and we set $\P_w^{(A)}(G):=\max_{g\in G}{\P_w^{(A)}(G,g)}$ (so that $\P_w^{(\Aut(G))}(G)=\P_w(G)$).
\item Moreover, we set $\p_w^{(A)}(G,g):=\P_w^{(A)}(G,g)/|G|^d$ and $\p_w^{(A)}(G):=\P_w^{(A)}(G)/|G|^d$.
\end{enumerate}
\end{nottation}

\section{Basic results on automorphic word maps}\label{sec2}

In this section, we prove the following lemma containing some basic bounds on fiber sizes of automorphic word maps:

\begin{lemma}\label{mainLem}
Let $w$ be a reduced word, $G$ a finite group, $A$ a subgroup of $\Aut(G)$ containing $\Inn(G)$. Furthermore, let $N$ be a characteristic subgroup of $G$, and denote by

\begin{itemize}
\item $\ind(A)$ the subgroup of $\Aut(G/N)$ consisting of all automorphisms of $G/N$ induced by some automorphism from $A$,
\item $\res(A)$ the subgroup of $\Aut(N)$ consisting of all restrictions of automorphisms from $A$ to $N$,
\item $\pi:G\rightarrow G/N$ the canonical projection.
\end{itemize}

Then the following hold:

\begin{enumerate}
\item For all $g\in G$, $\P_w^{(A)}(G,g)\leq\P_w^{(\ind(A))}(G/N,\pi(g))\cdot\P_w^{(\res(A))}(N,1)$, or in terms of proportions, $\p_w^{(A)}(G,g)\leq\p_w^{(\ind(A))}(G/N,\pi(g))\cdot\p_w^{(\res(A))}(N,1)$.
\item For all $g\in G$, $\P_w^{(A)}(G,g)\leq\P_w^{(A)}(G,1)$, or in terms of proportions, $\p_w^{(A)}(G,g)\leq\p_w^{(A)}(G,1)$. Hence $\P_w^{(A)}(G,1)=\P_w^{(A)}(G)$.
\item $\P_w^{(A)}(G)\leq\P_w^{(\ind(A))}(G/N)\cdot\P_w^{(\res(A))}(N)$, or in terms of proportions, $\p_w^{(A)}(G)\leq\p_w^{(\res(A))}(G/N)\cdot\p_w^{(\ind(A))}(N)$.
\end{enumerate}
\end{lemma}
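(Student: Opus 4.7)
My plan is to prove the three statements in the order (2), (1), (3), with Part (2) as the technical core; throughout, I exploit the hypothesis $\Inn(G)\subseteq A$ to replace automorphisms by composites with inner automorphisms without leaving $A$, and the characteristicness of $N$ to ensure that every $\alpha_i\in A$ both induces $\bar\alpha_i\in\ind(A)$ on $G/N$ and restricts to an element of $\res(A)$ on $N$.

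For Part (2), fix $\alpha_1,\ldots,\alpha_l\in A$ attaining $\P_w^{(A)}(G,g)$; the claim is trivial if the $g$-fiber is empty, so pick $\mathbf{x}_0=(x_{0,1},\ldots,x_{0,d})$ in it and form the bijection $\phi:G^d\to G^d$, $\phi(g_1,\ldots,g_d):=(x_{0,1}g_1,\ldots,x_{0,d}g_d)$. Expanding $\alpha_i(x_{0,\iota(i)}g_{\iota(i)})^{\epsilon_i}$ yields, at each position $i$, a constant factor $c_i:=\alpha_i(x_{0,\iota(i)})^{\epsilon_i}$ adjacent to a variable factor $v_i:=\alpha_i(g_{\iota(i)})^{\epsilon_i}$ (on the left of $v_i$ when $\epsilon_i=+1$, on the right when $\epsilon_i=-1$). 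Since $\Inn(G)\subseteq A$, I iteratively push all $c_i$'s to the left end of the product, each push past an intervening variable factor only replacing the underlying $\alpha_j$ by $\conj(c^{-1})\circ\alpha_j\in A$. A short induction gives
\[
w_G^{(\alpha_1,\ldots,\alpha_l)}(\phi(\mathbf{g}))=\Bigl(\prod_{i=1}^lc_i\Bigr)\cdot w_G^{(\alpha_1',\ldots,\alpha_l')}(\mathbf{g})=w_G^{(\alpha_1,\ldots,\alpha_l)}(\mathbf{x}_0)\cdot w_G^{(\alpha_1',\ldots,\alpha_l')}(\mathbf{g})=g\cdot w_G^{(\alpha_1',\ldots,\alpha_l')}(\mathbf{g})
\]
for certain $\alpha_i'\in A$, so $\phi$ carries the $1$-fiber of $w_G^{(\alpha_1',\ldots,\alpha_l')}$ bijectively onto the $g$-fiber of $w_G^{(\alpha_1,\ldots,\alpha_l)}$; hence $\P_w^{(A)}(G,g)\leq\P_w^{(A)}(G,1)$, and the equality $\P_w^{(A)}(G,1)=\P_w^{(A)}(G)$ follows.

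For Part (1), fix $\alpha_1,\ldots,\alpha_l\in A$ attaining $\P_w^{(A)}(G,g)$ and partition the $g$-fiber by its image under $\pi^{(d)}:G^d\to(G/N)^d$. Each image tuple $\bar{\mathbf{x}}$ satisfies $w_{G/N}^{(\bar\alpha_1,\ldots,\bar\alpha_l)}(\bar{\mathbf{x}})=\pi(g)$, so there are at most $\P_w^{(\ind(A))}(G/N,\pi(g))$ of them. For each such $\bar{\mathbf{x}}$, fix a lift $\tilde{\mathbf{x}}=(\tilde x_1,\ldots,\tilde x_d)\in G^d$; every preimage of $\bar{\mathbf{x}}$ then has the form $(\tilde x_1n_1,\ldots,\tilde x_dn_d)$ with $n_i\in N$. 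Running the analogous push-through — now sliding the variable factors $\alpha_i(n_{\iota(i)})^{\epsilon_i}$ to the right past the constants $\alpha_i(\tilde x_{\iota(i)})^{\epsilon_i}$, with normality of $N$ guaranteeing that each intermediate conjugate lies in $N$ — produces
\[
w_G^{(\alpha_1,\ldots,\alpha_l)}(\tilde x_1n_1,\ldots,\tilde x_dn_d)=w_G^{(\alpha_1,\ldots,\alpha_l)}(\tilde{\mathbf{x}})\cdot w_N^{(\beta_1,\ldots,\beta_l)}(n_1,\ldots,n_d)
\]
with suitable $\beta_i\in\res(A)$. Hence the preimages inside the $g$-fiber correspond bijectively to tuples $(n_1,\ldots,n_d)\in N^d$ with $w_N^{(\beta_1,\ldots,\beta_l)}(\mathbf{n})=w_G^{(\alpha_1,\ldots,\alpha_l)}(\tilde{\mathbf{x}})^{-1}g\in N$; by Part (2) applied to $N$, their number is at most $\P_w^{(\res(A))}(N,1)$. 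Summing over the $\bar{\mathbf{x}}$'s completes Part (1).

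Part (3) is then immediate: Part (2) identifies $\P_w^{(A)}(G)$ with $\P_w^{(A)}(G,1)$, and applying Part (1) with $g=1$ (so $\pi(g)=1_{G/N}$) bounds this by $\P_w^{(\ind(A))}(G/N,1_{G/N})\cdot\P_w^{(\res(A))}(N,1_N)\leq\P_w^{(\ind(A))}(G/N)\cdot\P_w^{(\res(A))}(N)$. The main obstacle I anticipate is the careful bookkeeping of the two push-through arguments: at every step one must verify that the composite of $\alpha_i$ with the relevant inner automorphism of $G$ genuinely lies in $A$ (respectively restricts to an element of $\res(A)$), so that the rearranged products really are automorphic word maps of the type being counted.
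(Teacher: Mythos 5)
Your proposal is correct, and its core computation is the same as the paper's: the conjugation ``push-through'' that moves all constant factors $\alpha_i(\cdot)^{\epsilon_i}$ of fixed group elements to one end of the product, at the cost of composing the remaining automorphisms with inner automorphisms of $G$ (legitimate because $\Inn(G)\leq A$ and $A$ is a subgroup), combined with the coset-wise count: at most $\P_w^{(\ind(A))}(G/N,\pi(g))$ cosets of $N^d$ meet the fiber, and each meets it in at most $\P_w^{(\res(A))}(N,1)$ points. The difference is organizational: the paper proves (1) directly by choosing the coset representative inside the fiber (so the within-coset count lands immediately on the $1$-fiber of an automorphic word map on $N$) and then obtains (2) for free as the special case $N=G$; you instead prove (2) first as a standalone translation-invariance statement via the bijection $\phi$, and then in (1) allow an arbitrary coset representative and invoke (2) for the pair $(N,\res(A))$ to pass from the fiber of $w_G^{(\alpha_1,\ldots,\alpha_l)}(\tilde{\mathbf{x}})^{-1}g$ to the $1$-fiber. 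That extra appeal requires checking that $\res(A)$ satisfies the hypotheses of (2), i.e.\ $\Inn(N)\leq\res(A)$ — which does hold, since for $n\in N$ the restriction of $\conj(n)\in\Inn(G)\leq A$ to $N$ is the inner automorphism of $N$ given by $n$ — so you should state this explicitly; with that noted, both routes give identical bounds, and your version of (2) is arguably cleaner as an independent statement (the fiber of any $g$ is carried bijectively onto a $1$-fiber of another automorphic word map with automorphisms still in $A$).
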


\begin{proof}
For (1): As before, we write $w=x_1^{\epsilon_1}\cdots x_l^{\epsilon_l}$ with $\epsilon_i\in\{\pm1\}$, $x_1,\ldots,x_l\in\{X_1,\ldots,X_d\}$ and $\iota:\{1,\ldots,l\}\rightarrow\{1,\ldots,d\}$ such that $x_i=X_{\iota(i)}$. Furthermore, fix an $l$-tuple $(\alpha_1,\ldots,\alpha_l)$ of elements of $A$ such that the size of the fiber $\Phi$ of $g$ under $w_G^{(\alpha_1,\ldots,\alpha_l)}$ equals $\P_w^{(A)}(G,g)$. For $i=1,\ldots,l$, denote by $\tilde{\alpha_i}$ the automorphism of $G/N$ induced by $\alpha_i$.

We will establish the inequality by a coset-wise counting argument. More precisely, we will show the following two assertions, which together imply the inequality:

\begin{enumerate}
\item The number of cosets of $N^d$ in $G^d$ having nonempty intersection with $\Phi$ is at most $\P_w^{(\ind(A))}(G/N,\pi(g))$.
\item $\Phi$ intersects each coset of $N^d$ in $G^d$ in at most $\P_w^{(\res(A))}(N,1)$ many elements.
\end{enumerate}

For the first assertion, let $(g_1,\ldots,g_d)\in\Phi$. In other words,

\begin{equation}\label{eq1}
\alpha_1(g_{\ind(1)})^{\epsilon_1}\cdots\alpha_l(g_{\ind(l)})^{\epsilon_l}=g.
\end{equation}

Applying $\pi$ to both sides of Formula (\ref{eq1}) yields

\[\tilde{\alpha_1}(\pi(g_{\ind(1)}))\cdots\tilde{\alpha_l}(\pi(g_{\ind(l)}))=\pi(g),\]

and thus that $(\pi(g_1),\ldots,\pi(g_d))$ lies in the fiber of $\pi(g)$ under $w_{G/N}^{(\tilde{\alpha_1},\ldots,\tilde{\alpha_l})}$. The assertion follows immediately from this.

For the second assertion, fix a coset $C$ of $N^d$ in $G^d$, say $C=N^d(g_1,\ldots,g_d)$. We want to show that $|C\cap\Phi|\leq\P_w^{(\res(A))}(N,1)$. Of course, we may assume that $C\cap\Phi$ is nonempty, and w.l.o.g.~even that the coset representative $(g_1,\ldots,g_d)$ which we fixed lies in $\Phi$. Hence Formula (\ref{eq1}) holds. We now characterize those $(n_1,\ldots,n_d)\in N^d$ such that $w_G^{(\alpha_1,\ldots,\alpha_l)}(n_1g_1,\ldots,n_dg_d)=g$ as well, i.e., such that

\begin{equation}\label{eq2}
g=\alpha_1(n_{\iota(1)}g_{\iota(1)})^{\epsilon_1}\cdots\alpha_l(n_{\iota(l)}g_{\iota(l)})^{\epsilon_l}=t_1\cdots t_l,
\end{equation}

where

\[
t_i=\begin{cases}\alpha_i(n_{\iota(i)})^{\epsilon_i}\alpha_i(g_{\iota(i)})^{\epsilon_i}, & \text{if }\epsilon_i=+1, \\ \alpha_i(g_{\iota(i)})^{\epsilon_i}\alpha_i(n_{\iota(i)})^{\epsilon_i}, & \text{if }\epsilon_i=-1.\end{cases}
\]

Note that under the assumed Formula (\ref{eq1}), Formula (\ref{eq2}) is equivalent to the following:

\begin{equation}\label{eq3}
1=g\cdot g^{-1}=t_1t_2\cdots t_l\cdot\alpha_l(g_{\iota(l)})^{-\epsilon_l}\cdots\alpha_2(g_{\iota(2)})^{-\epsilon_2}\alpha_1(g_{\iota(1)})^{-\epsilon_1}.
\end{equation}

We now transform the product expression on the RHS of Formula (\ref{eq3}) without changing its value as follows: The product has a unique subproduct of the form $\alpha_l(g_{\iota(l)})^{\epsilon_l}w_l\alpha_l(g_{\iota(l)})^{-\epsilon_l}$ (where $w_l$ is either empty or equal to $\alpha_l(n_{\iota(l)})^{\epsilon_l}$, depending on whether $\epsilon_l=1$ or $\epsilon_l=-1$). Replace this subproduct by the expression $\conj(\alpha_l(g_{\iota(l)})^{\epsilon_l})(w_1)$. The resulting product expression has a unique subproduct of the form $\alpha_{l-1}(g_{\iota(l-1)})^{\epsilon_{l-1}}w_{l-1}\alpha_{l-1}(g_{\iota(l-1)})^{-\epsilon_{l-1}}$. Replace this subproduct by $\conj(\alpha_{l-1}(g_{\iota(l-1)})^{\epsilon_{l-1}})(w_{l-1})$ and distribute the application of the automorphism $\conj(\alpha_{l-1}(g_{\iota(l-1)})^{\epsilon_{l-1}})$ onto the single factors of the expression $w_{l-1}$. Continuing in this fashion, we eventually receive an expression of the form $w_N^{(\beta_1,\ldots,\beta_l)}(n_1,\ldots,n_d)$, where each $\beta_i$ is the restriction to $N$ of an element of $A$, namely of the composition of some inner automorphism of $G$ with $\alpha_i$; see also Example \ref{mainLemEx} for an illustration.

For (2): This follows by setting $N:=G$ in point (1) of this lemma.

For (3): By points (1) and (2), we have

\begin{align*}
\P_w^{(A)}(G) &=\P_w^{(A)}(G,1)\leq\P_w^{(\ind(A))}(G/N,\pi(1))\cdot\P_w^{(\res(A))}(N,1) \\
&=\P_w^{(\ind(A))}(G/N)\cdot\P_w^{(\res(A))}(N),
\end{align*}

using that $\ind(A)$ resp. $\res(A)$ contains $\Inn(G/N)$ resp. $\Inn(N)$.
\end{proof}

\begin{example}\label{mainLemEx}
We illustrate the transformations of expressions described in the proof of Lemma \ref{mainLem}(1) by the following example: Say $w=[X_1,X_2]=X_1X_2X_1^{-1}X_2^{-1}$. Then Formula (\ref{eq1}) becomes

\[
\alpha_1(g_1)\alpha_2(g_2)\alpha_3(g_1)^{-1}\alpha_4(g_2)^{-1}=g,
\]

and Formula (\ref{eq2}) becomes

\begin{align*}
g &= \alpha_1(n_1g_1)\alpha_2(n_2g_2)\alpha_3(n_1g_1)^{-1}\alpha_4(n_2g_2)^{-1} \\
&= \alpha_1(n_1)\alpha_1(g_1)\alpha_2(n_2)\alpha_2(g_2)\alpha_3(g_1)^{-1}\alpha_3(n_1)^{-1}\alpha_4(g_2)^{-1}\alpha_4(n_2)^{-1}.
\end{align*}

Together, this yields

\begin{align*}
&1 = g\cdot g^{-1} \\
&= \alpha_1(n_1)\alpha_1(g_1)\alpha_2(n_2)\alpha_2(g_2)\alpha_3(g_1)^{-1}\alpha_3(n_1)^{-1}\cdot\alpha_4(g_2)^{-1}\alpha_4(n_2)^{-1}\alpha_4(g_2) \\ &\cdot\alpha_3(g_1)\alpha_2(g_2)^{-1}\alpha_1(g_1)^{-1} \\
&= \alpha_1(n_1)\alpha_1(g_1)\alpha_2(n_2)\alpha_2(g_2)\cdot\alpha_3(g_1)^{-1}\alpha_3(n_1)^{-1}(\conj(\alpha_4(g_2)^{-1})\circ\alpha_4)(n_2)^{-1}\alpha_3(g_1) \\ &\cdot\alpha_2(g_2)^{-1}\alpha_1(g_1)^{-1} \\
&= \alpha_1(n_1)\alpha_1(g_1)\alpha_2(n_2)\alpha_2(g_2)\cdot\conj(\alpha_3(g_1)^{-1})(\alpha_3(n_1)^{-1}(\conj(\alpha_4(g_2)^{-1})\circ\alpha_4)(n_2)^{-1}) \\ &\cdot\alpha_2(g_2)^{-1}\alpha_1(g_1)^{-1} \\
&= \alpha_1(n_1)\alpha_1(g_1)\alpha_2(n_2)\alpha_2(g_2)\cdot(\conj(\alpha_3(g_1)^{-1})\circ\alpha_3)(n_1)^{-1} \\ &\cdot(\conj(\alpha_3(g_1)^{-1}\alpha_4(g_2)^{-1})\circ\alpha_4)(n_2)^{-1}\cdot\alpha_2(g_2)^{-1}\alpha_1(g_1)^{-1} \\
&= \alpha_1(n_1)\alpha_1(g_1)\alpha_2(n_2)\cdot(\conj(\alpha_2(g_2)\alpha_3(g_1)^{-1})\circ\alpha_3)(n_1)^{-1} \\ &\cdot(\conj(\alpha_2(g_2)\alpha_3(g_1)^{-1}\alpha_4(g_2)^{-1})\circ\alpha_4)(n_2)^{-1}\cdot\alpha_1(g_1)^{-1} \\
&= \alpha_1(n_1)\cdot(\conj(\alpha_1(g_1))\circ\alpha_2)(n_2)\cdot(\conj(\alpha_1(g_1)\alpha_2(g_2)\alpha_3(g_1)^{-1})\circ\alpha_3)(n_1)^{-1} \\ &\cdot(\conj(\alpha_1(g_1)\alpha_2(g_2)\alpha_3(g_1)^{-1}\alpha_4(g_2)^{-1})\circ\alpha_4)(n_2)^{-1} \\
&= w_N^{(\alpha_1,\conj(\alpha_1(g_1))\circ\alpha_2,\conj(\alpha_1(g_1)\alpha_2(g_2)\alpha_3(g_1)^{-1})\circ\alpha_3,\conj(\alpha_1(g_1)\alpha_2(g_2)\alpha_3(g_1)^{-1}\alpha_4(g_2)^{-1})\circ\alpha_4)}(n_1,n_2),
\end{align*}

where in the last expression, all automorphisms are understood to be restricted to $N$.
\end{example}

\section{On fibers of automorphic word maps on nonabelian finite simple groups}\label{sec3}

\subsection{Larsen and Shalev's result and the main result of this section}\label{subsec3P1}

The following theorem is an equivalent reformulation of \cite[Theorem 1.1]{LS12a}:

\begin{theoremm}\label{larShaTheo}
For each nonempty and reduced word $w$ in $d$ distinct variables, there exist constants $N(w),\eta(w)>0$ such that for all nonabelian finite simple groups $S$ with $|S|\geq N(w)$, the inequality $\Pi_w(S)\leq |S|^{d-\eta(w)}$ holds.\qed
\end{theoremm}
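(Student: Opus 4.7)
The plan is essentially one of transcription: the statement is explicitly advertised as an equivalent reformulation of \cite[Theorem 1.1]{LS12a}, so the proof consists entirely of aligning the two formulations. I would begin by recalling Larsen and Shalev's theorem, which asserts that for every nontrivial reduced word $w$ in $d$ variables there exist constants $\epsilon=\epsilon(w)>0$ and $N=N(w)$ such that $\Pi_w(S)\leq|S|^{d-\epsilon}$ holds whenever $S$ is a nonabelian finite simple group of order at least $N$. Setting $\eta(w):=\epsilon(w)$ and carrying over their threshold $N(w)$ then yields Theorem \ref{larShaTheo} verbatim.

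The only genuine check is that the quantifier structure matches: Larsen and Shalev's theorem produces a single power-saving constant $\eta(w)$ depending only on $w$, uniform over all sufficiently large nonabelian finite simple $S$, which is exactly what the statement above asks for. (One might worry that their version is phrased with an $\epsilon$-tolerance and a stronger exponent such as $d-1+\epsilon$, but in fact the relevant form in \cite{LS12a} already provides a fixed power saving, so no weakening or strengthening is needed.) There is no real obstacle here; in particular, I would not try to reprove the Larsen--Shalev bound, since their argument draws on substantial representation-theoretic input (character estimates on nonabelian finite simple groups and algebro-geometric methods for groups of Lie type) that is orthogonal to the combinatorial bookkeeping being developed in this paper. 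The theorem is invoked as a black box, and will serve as the base case on which Section \ref{sec3} builds the extension to automorphic word maps and Section \ref{sec4} grounds the inductive argument for Theorem \ref{mainTheo}.
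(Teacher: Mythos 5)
Your proposal matches the paper exactly: Theorem \ref{larShaTheo} carries no proof in the paper (it is stated with a \qed as an equivalent reformulation of \cite[Theorem 1.1]{LS12a}), so invoking Larsen--Shalev as a black box and matching the quantifiers is precisely what is intended. The only point worth keeping in mind is that if one works from a version of their result stated with an implied multiplicative constant, that constant is absorbed by enlarging $N(w)$ (or slightly shrinking $\eta(w)$), which is immediate and does not affect anything downstream.
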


The proof of this theorem in \cite{LS12a} is split into three parts (note that the sporadic groups can be ignored here, as the fiber size bound only needs to be shown for large enough $S$):

\begin{itemize}
\item First, the bound is established for large enough alternating groups by means of a certain combinatorial construction.
\item Next, the bound is established for all simple Lie type groups of sufficiently high rank, where the lower bound on the rank is so large that only classical groups need to be considered in this case. As Larsen and Shalev say themselves, the argument is conceptually similar to the one for alternating groups.
\item Finally, the simple Lie type groups of bounded rank are treated by means of an argument using results of algebraic geometry.
\end{itemize}

It turns out that Larsen and Shalev's arguments in the first two cases can be modified to prove the following, which is the main result of this section:

\begin{theoremm}\label{simpleGroupsTheo}
Let $w$ be a reduced word of length $l\geq 1$ in $d$ distinct variables, and $M=M(w)$ as in Notation \ref{mNot}. Then the following hold:

\begin{enumerate}
\item For all $n\in\IN^+$ with $n\geq 256l^{16}\e^{16Md-2}$, we have $\P_w(\Alt_n)\leq |\Alt_n|^{d-1/(16M)}$.
\item For all simple Lie type groups $S$ of untwisted Lie rank at least $72(d+1)^2l^2$, we have $\P_w(S)\leq |S|^{d-1/(72(d+1)l^2)}$.
\end{enumerate}
\end{theoremm}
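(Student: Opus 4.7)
The plan is to adapt Larsen and Shalev's proof of Theorem \ref{larShaTheo} (\cite[Theorem 1.1]{LS12a}), parts one and two, to the automorphic setting. The crucial enabling fact is that for each nonabelian finite simple group $S$ considered here, $\Out(S)$ is small and concretely describable, so each factor $\alpha_i(g_{\iota(i)})^{\epsilon_i}$ appearing in an automorphic word map differs from a plain substitution only by controlled data. Throughout, fix automorphisms $\alpha_1,\ldots,\alpha_l\in\Aut(S)$ achieving the maximum fiber size $\P_w(S)$; our task is to bound the size of a single fiber of $w_S^{(\alpha_1,\ldots,\alpha_l)}$.

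For part (1), since $\Aut(\Alt_n)=\Sym_n$ for $n\geq 7$, each $\alpha_i$ is conjugation by some $\sigma_i\in\Sym_n$, so an automorphic word map on $\Alt_n$ reads
\[
(g_1,\ldots,g_d) \;\mapsto\; \prod_{i=1}^{l} \bigl(\sigma_i\, g_{\iota(i)}\, \sigma_i^{-1}\bigr)^{\epsilon_i}.
\]
Larsen and Shalev bound fiber sizes on $\Alt_n$ by showing that a large fiber forces the tuples in it to share a rich common support/fixed-point structure whose count is small. I would run the same scheme, replacing the support of each occurrence of a variable $X_j$ at position $i$ by its $\sigma_i$-translate, and arrange the eventual count as a union of at most $l$ permuted supports rather than a single one. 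The shape of $M$ as a geometric sum in $2l(d+1)$ of depth $2l+3$ strongly suggests that the argument proceeds by nested induction on word length, classifying the at most $2l(d+1)$ possible ``factor-types'' at each step, and that the extra flexibility introduced by the $\sigma_i$ contributes exactly the factor $M$ in the exponent loss $1/(16M)$.

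For part (2), every automorphism of a classical simple group $S$ of untwisted Lie rank $r$ over $\IF_q$ decomposes as a product of inner, diagonal, field and graph automorphisms, each acting semilinearly on a natural $\IF_q$-module $V$ in an explicit way. Larsen and Shalev bound $\pi_w(S)$ by a rank/dimension count on the image of the word map realized as a morphism of varieties built from $V$. The same count transfers to automorphic word maps after allowing the coefficient field of $V$ to be Frobenius-twisted at each of the $l$ positions and after passing between $V$ and its dual to absorb graph automorphisms. The rank hypothesis $r\geq 72(d+1)^2l^2$ supplies enough room (of order $l$) to accommodate all $l$ such twists, and the exponent $1/(72(d+1)l^2)$ emerges by scaling their estimate with these enlarged parameters.

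The main obstacle will be the alternating-group case: the constant $M$ is an iterated polynomial in $l$ and $d$, which indicates that the $\alpha_i$ cannot simply be absorbed as a uniformly bounded perturbation of Larsen and Shalev's analysis but instead require a genuine nested recursion on the word length, with the threshold $n\geq 256 l^{16}\e^{16Md-2}$ preserved at each step so that the combinatorial estimates remain applicable after reductions. The Lie-type case should be comparatively routine once uniformity of the rank count in both $d$ and $l$ has been verified and the semilinear modifications caused by field and graph automorphisms have been bookkept.
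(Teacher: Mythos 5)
Your high-level plan (adapt the alternating and high-rank classical cases of Larsen--Shalev, using that automorphisms of these groups are concretely realizable) is indeed the paper's strategy, but the specific mechanisms you guess at are wrong in ways that would derail the proof. Most seriously, for part (2) you propose to transfer Larsen--Shalev's ``rank/dimension count on the image of the word map realized as a morphism of varieties'': that algebro-geometric argument is their treatment of the \emph{bounded-rank} case, which the paper explicitly does not extend (and notes as open, precisely because field automorphisms make the degrees of the defining equations unbounded). Their high-rank classical case is instead a linear-algebra counting argument, conceptually parallel to the alternating case: one double-counts pairs $(\vec g,\vec z)$ of a variable tuple and a tuple of vectors subject to span conditions, converts membership in the fiber into $\IF_q$-linearly independent mapping conditions $g_k(v)=w$, and bounds the count via a stabilizer estimate (the analogue of \cite[Lemma 3.2]{LS12a}). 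There is also a structural gap: the simple group $S=\overline{\Omega}$ is a projective group and does not act on the natural module at all, so conditions of the form $\alpha(g)(v)=w$ cannot even be written down for $S$ itself. The paper first passes to the full isometry group $I$ via a quantitative ``characteristic section'' lemma (needing $|I|\le|\overline{\Omega}|^{1+\epsilon}$ and the fact that every automorphism of $\overline{\Omega}$ lifts to $\A(I)$), and it must treat $\GL_n(q)$ separately because the inverse-transpose automorphism turns some conditions into conditions on $g^t$, which requires a different counting lemma bounding matrices satisfying mixed conditions on $g$ and $g^t$. Your remark about ``passing to the dual'' gestures at this but gives no substitute for that count.

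For part (1), your reading of the constant $M$ is also off: there is no nested induction on word length. In the actual argument one reduces (via Lemma \ref{mainLem}(2)) to the fiber of the identity, works in $\Sym_n$ (with $\Alt_n$ recovered by a general transfer lemma for characteristic sections), and defines a metric on $\{1,\ldots,n\}$ whose ``moves'' are the $2l(d+1)$ permutations $\alpha_i(g_j)^{\pm1}$ and $\t(\alpha_i)^{\pm1}$; the geometric sum defining $M$ is simply the cardinality bound for a ball of radius $2l+2$ in this metric, which is what allows one to greedily choose $L=\lfloor n/(4M)\rfloor$ points pairwise far apart and obtain the lower bound $|X|\ge|\Phi|(n/4)^L$ against the upper bound $l^{4L}n^{dn}$ coming from independent mapping conditions. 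A further subtlety you do not address is why the derived conditions on the $g_k$ are pairwise independent or contradictory; this uses the reducedness of $w$ (no two consecutive letters cancel) together with an ``independence'' notion for points twisted by the $\t(\alpha_i)$, and it is exactly where the automorphisms genuinely complicate Larsen--Shalev's original bookkeeping. As it stands your proposal identifies the right starting point but neither reproduces these counting arguments nor supplies correct replacements for them.
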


Whether such bounds can also be established for the simple Lie type groups of \enquote{small} rank is open; see Section \ref{sec5} for some more remarks on this.

\subsection{Reduction of Theorem \ref{simpleGroupsTheo} to Theorem \ref{alternativeTheo}}\label{subsec3P2}

Similarly to \cite{LS12a}, the main part of the argument for Theorem \ref{simpleGroupsTheo} will not provide upper bounds on $\P_w(S)$ for the simple groups $S$ in question directly, but on $\P_w^{(A)}(G)$, where $G$ is a finite group \enquote{closely related with $S$} and $A$ a certain subgroup of $\Aut(G)$. That we can do this without loss of generality is justified by the following lemma, a modification of \cite[Lemma 2.1]{LS12a}, which served the same purpose:

\begin{lemmma}\label{similarGroupsLem}
Let $w$ be a nonempty reduced word in $d$ distinct variables, $\G$ and $\Hcal$ infinite classes of finite groups, $N,\eta>0$. Assume that for each $H\in\Hcal$ with $|H|\geq N$, there is associated a subgroup $\A(H)\leq\Aut(H)$ such that $\P_w^{(\A(H))}(H)\leq |H|^{d-\eta}$ (and note that this implies $\eta\leq d$). Set $\epsilon:=\eta/(2(1+d-\eta))>0$. Finally, assume that there exists $C>0$ such that for all $G\in\G$ with $|G|\geq C$, the following exist:

\begin{itemize}
\item an $H\in\Hcal$ such that $|H|\leq |G|^{1+\epsilon}$,
\item characteristic subgroups $K$ and $H'$ of $H$ with $K\leq H'$ such that $G\cong H'/K$ (we say that $G$ is a \emph{characteristic section of $H$}) and such that every automorphism of $G$ can be induced by the restriction to $H'$ of a suitable automorphism of $H$ from $\A(H)$.
\end{itemize}

Then the following holds: For all $G\in\G$ with $|G|\geq\max\{N,C\}$, $\P_w(G)\leq|G|^{d-\eta/2}$.
\end{lemmma}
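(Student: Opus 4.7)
The plan is to reduce the desired bound on $\P_w(G)$ directly to the assumed bound on $\P_w^{(\A(H))}(H)$ via a lifting-and-pigeonhole argument, exploiting the characteristic section structure $G \cong H'/K$ together with the lifting of automorphisms of $G$ to $\A(H)$. First, given $G \in \G$ with $|G| \geq \max\{N, C\}$, we invoke the hypothesis to obtain $H \in \Hcal$ with $|H| \leq |G|^{1+\epsilon}$ and characteristic subgroups $K \leq H'$ of $H$ realizing $G$ as $H'/K$ and permitting such lifts. Since $|H| \geq |H'| \geq |G| \geq N$, the assumed inequality $\P_w^{(\A(H))}(H) \leq |H|^{d-\eta}$ applies.

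The central step is a multiplicity estimate relating a fiber on $G$ to a fiber on $H$. We fix $\phi_1, \ldots, \phi_l \in \Aut(G)$ and $g \in G$, let $F$ be the fiber of $g$ under $w_G^{(\phi_1, \ldots, \phi_l)}$, pick $\alpha_i \in \A(H)$ whose restriction to $H'$ induces $\phi_i$ on $H'/K$, and fix any lift $\tilde{g} \in H'$ of $g$. For each $(g_1, \ldots, g_d) \in F$ there are $|K|^d$ coordinate-wise lifts $(\tilde{g}_1, \ldots, \tilde{g}_d) \in (H')^d$, and each resulting value $w_H^{(\alpha_1, \ldots, \alpha_l)}(\tilde{g}_1, \ldots, \tilde{g}_d)$ lies in $H'$ (since the $\alpha_i$ preserve $H'$) and projects to $g$ in $H'/K$, so it lies in the coset $\tilde{g}K$, which has cardinality $|K|$. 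Distributing the resulting $|F| \cdot |K|^d$ elements of $(H')^d \subseteq H^d$ over the $|K|$ possible images, the pigeonhole principle produces some $h \in \tilde{g}K$ with fiber of size at least $|F| \cdot |K|^{d-1}$ under $w_H^{(\alpha_1, \ldots, \alpha_l)}$, and therefore $\P_w^{(\A(H))}(H) \geq |F| \cdot |K|^{d-1}$.

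Combining this with the hypothesis on $H$, dropping the harmless factor $|K|^{d-1} \geq 1$, and using $|H| \leq |G|^{1+\epsilon}$ gives $|F| \leq |G|^{(1+\epsilon)(d-\eta)}$. A short calculation then shows that the choice $\epsilon = \eta/(2(1+d-\eta))$ forces $(1+\epsilon)(d-\eta) \leq d-\eta/2$; indeed this reduces to the trivial inequality $(d-\eta)/(1+d-\eta) \leq 1$. Since $\phi_1, \ldots, \phi_l$ and $g$ were arbitrary, we conclude $\P_w(G) \leq |G|^{d-\eta/2}$, as required.

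The main obstacle is conceptual rather than computational: a natural first instinct is to apply Lemma \ref{mainLem}(3) to $H$ with characteristic subgroup $H'$, but this produces an inequality in the wrong direction to bound fibers on $H'$ (and hence on $G$) from above. One must therefore transport fibers from $G$ up to $H$ by hand via the lifting trick above. Once this is done the rest collapses cleanly: the $|K|$-factors conspire in our favor, and the precise value of $\epsilon$ is dictated exactly by the requirement that the size blowup $|G|^{\epsilon(d-\eta)}$ fit inside the extra slack $\eta/2$ in the target exponent.
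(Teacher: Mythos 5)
Your proof is correct and follows essentially the same route as the paper: lift the automorphisms of $G$ to elements of $\A(H)$, observe that lifts of fiber elements are mapped by the automorphic word map on $H'$ into the coset of $K$ above (a lift of) $g$, and then apply the hypothesis $\P_w^{(\A(H))}(H)\leq|H|^{d-\eta}$ together with $|H|\leq|G|^{1+\epsilon}$. The only difference is in the final counting and is cosmetic: you pigeonhole over the $|K|$ coset elements to produce one large fiber on $H$ (yielding the slightly sharper bound $|F|\leq|H|^{d-\eta}$, so your exponent estimate has slack), whereas the paper bounds the fiber on $G$ as the image of a union of at most $|G|^{\epsilon}$ fibers on $H'$, making the computation $\epsilon+(1+\epsilon)(d-\eta)=d-\eta/2$ exact.
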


\begin{proof}
Let $G\in\G$ with $|G|\geq\max\{N,C\}$. Fix $H\in\Hcal$ with $|H|\leq|G|^{1+\epsilon}$ containing characteristic subgroups $K$ and $H'$ as described in the assumptions. We assume w.l.o.g.~that $G=H'/K$ (not just isomorphic). Note that we have in particular that $|H|\geq|G|\geq N$, so that $\P_w^{(\A(H))}(H)\leq|H|^{d-\eta}$ by assumption.

We want to bound the fiber sizes of automorphic word maps over $G$. To this end, fix $\tilde{\alpha_1},\ldots,\tilde{\alpha_l}\in\Aut(G)$ and $g\in G$ such that the fiber size of $w_G^{(\tilde{\alpha_1},\ldots,\tilde{\alpha_l})}$ equals $\P_w(G)$ (note that since we are not assuming that $\A(H)$ contains $\Inn(H)$, we also cannot assume w.l.o.g.~here that $g=1$, as would otherwise follow from Lemma \ref{mainLem}(2)). Fix $h\in H'$ projecting onto $g\in G=H'/K$, and fix $\alpha_1,\ldots,\alpha_l\in\A(H)\leq\Aut(H)$ such that for $i=1,\ldots,l$, $(\alpha_i)_{\mid H'}$ induces $\tilde{\alpha_i}$ on $G$.

Since each fiber of $w_H^{(\alpha_1,\ldots,\alpha_l)}$, and thus in particular each fiber of $w_{H'}^{((\alpha_1)_{\mid H'},\ldots,(\alpha_l)_{\mid H'})}$, has size at most $|H|^{d-\eta}$, and since the fiber of $g$ under $w_G^{(\tilde{\alpha_1},\ldots,\tilde{\alpha_l})}$ can be expressed as the image under the canonical projection $H'\rightarrow G$ of a disjoint union of at most $|G|^{\epsilon}$ many fibers of $w_{H'}^{((\alpha_1)_{\mid H'},\ldots,(\alpha_l)_{\mid H'})}$, we get that

\[
\P_w(G)=|(w_G^{(\tilde{\alpha_1},\ldots,\tilde{\alpha_l})})^{-1}[\{g\}]|\leq |G|^{\epsilon}\cdot|H|^{d-\eta}\leq |G|^{\epsilon+(1+\epsilon)(d-\eta)}=|G|^{d-\eta/2},
\]

where the last equality is by definition of $\epsilon$.
\end{proof}

In accordance with our announcement before Lemma \ref{similarGroupsLem}, below, we will prove Lemmata \ref{altSymLem} and \ref{omegaILem}, which allow us to reduce the proof of Theorem \ref{simpleGroupsTheo} to the proof of a theorem concerning fibers of automorphic word maps in slightly different classes of groups, Theorem \ref{alternativeTheo}. For example, for the alternating groups, these \enquote{closely related} groups will be just the symmetric groups. To make the formulations of the lemmata shorter, let us first introduce the following terminology:

\begin{deffinition}\label{niceDef}
Let $w$ be a nonempty reduced word in $d$ distinct variables, and let $N,\eta>0$. Furthermore, let $\G$ be a class of finite groups and $\A$ a function that maps each $G\in\G$ to a subgroup $\A(G)\leq\Aut(G)$. We say that $\G$ is \emph{$(\A,N,\eta)$-nice for $w$}, or that \emph{$(\A,N,\eta)$ is a niceness tuple of $\G$ for $w$}, if and only if for all $G\in\G$ with $|G|\geq N$, we have $\P_w^{(\A(G))}(G)\leq |G|^{d-\eta}$.
\end{deffinition}

The following lemma allows us to reduce Theorem \ref{simpleGroupsTheo}(1) to the study of automorphic word map fibers in \emph{symmetric} groups:

\begin{lemmma}\label{altSymLem}
Let $w$ be a nonempty reduced word in $d$ distinct variables. Assume that for some $N\geq |\Alt_7|=2520$ and some $\eta>0$, the class of finite symmetric groups is $(\Aut,N,\eta)$-nice for $w$. Then the class of finite alternating groups is $(\Aut,\max\{N,2^{2(1+d-\eta)/\eta+1}\},\eta/2)$-nice for $w$.
\end{lemmma}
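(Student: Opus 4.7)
The plan is to invoke Lemma \ref{similarGroupsLem} with $\G$ the class of finite alternating groups, $\Hcal$ the class of finite symmetric groups, and $\A(H):=\Aut(H)$ for $H\in\Hcal$; with these choices the assumed $(\Aut,N,\eta)$-niceness of $\Hcal$ is exactly the niceness hypothesis of that lemma, so it remains only to produce, for each large enough $G\in\G$, a suitable $H\in\Hcal$ together with characteristic subgroups $K\leq H'$ of $H$ and a lifting property for $\Aut(G)$.

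Given $G=\Alt_n\in\G$ with $|G|\geq N\geq 2520=|\Alt_7|$, one necessarily has $n\geq 7$. I would take $H:=\Sym_n$, $H':=\Alt_n=G$, and $K:=1$. For $n\geq 5$, $\Alt_n$ is the unique subgroup of index $2$ in $\Sym_n$, hence characteristic in $H$, and $K$ is trivially characteristic in $H$; moreover $G=H'=H'/K$. To verify the remaining part of the second bullet of Lemma \ref{similarGroupsLem}, I would invoke the classical fact that for $n\geq 7$ (in particular $n\neq 6$), $\Aut(\Alt_n)\cong\Sym_n$ via the conjugation action: every $\phi\in\Aut(G)$ has the form $\conj(\sigma)_{\mid\Alt_n}$ for a unique $\sigma\in\Sym_n$, and $\conj(\sigma)\in\Inn(H)\leq\Aut(H)=\A(H)$ has restriction to $H'$ equal to $\phi$ itself.

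For the first bullet, $|H|=n!=2|G|$, so the required inequality $|H|\leq|G|^{1+\epsilon}$ with $\epsilon=\eta/(2(1+d-\eta))$ reduces to $|G|\geq 2^{1/\epsilon}=2^{2(1+d-\eta)/\eta}$, which is certainly satisfied whenever $|G|\geq 2^{2(1+d-\eta)/\eta+1}$. Setting $C:=2^{2(1+d-\eta)/\eta+1}$, Lemma \ref{similarGroupsLem} then gives $\P_w(G)\leq|G|^{d-\eta/2}$ for all $G\in\G$ with $|G|\geq\max\{N,C\}$, which is exactly the claimed $(\Aut,\max\{N,2^{2(1+d-\eta)/\eta+1}\},\eta/2)$-niceness.

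There is no substantial obstacle in the argument; the only subtlety is the exceptional behavior at $n=6$, where $|\Out(\Alt_6)|=4$ and the extra outer automorphisms of $\Alt_6$ do not arise as conjugations by elements of $\Sym_6$. The hypothesis $N\geq|\Alt_7|=2520$ is precisely what excludes this case, forcing $n\geq 7$ throughout and thereby making the lifting of automorphisms from $\Alt_n$ to $\Sym_n$ automatic.
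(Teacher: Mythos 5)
Your proposal is correct and follows essentially the same route as the paper: both apply Lemma \ref{similarGroupsLem} with $\Hcal$ the symmetric groups, $H'=\Alt_n$, $K=1$, lift automorphisms via $\Aut(\Alt_n)\cong\Sym_n$ for $n\geq 7$, and derive the same size bound $2^{2(1+d-\eta)/\eta+1}$ from $|\Sym_n|=2|\Alt_n|$. The only (cosmetic) difference is that the hypothesis of Lemma \ref{similarGroupsLem} requires the lifting property for \emph{all} $G$ with $|G|\geq C$, so strictly one should take $C:=\max\{2520,\,2^{2(1+d-\eta)/\eta+1}\}$ as the paper does to exclude $\Alt_6$ at the level of $C$ rather than of $N$; since $N\geq 2520$, the resulting niceness tuple is identical.
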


\begin{proof}
Set $\epsilon:=\eta/(2(1+d-\eta))$. We want to apply Lemma \ref{similarGroupsLem} with $\Hcal$ the class of finite symmetric groups and $\G$ the class of finite alternating groups. Let us first find $C_0>0$ such that for all $\Alt_n$ with $|\Alt_n|\geq C_0$, $|\Sym_n|\leq|\Alt_n|^{1+\epsilon}$. Taking logarithms, the inequality turns into $\log{n!}\leq(1+\epsilon)(\log{n!}-\log{2})$, which is equivalent to $\log{n!}\geq\log{2}\cdot(1+\epsilon)/\epsilon=\log{2}(1+1/\epsilon)=\log{2}(1+2(1+d-\eta)/\eta)$. Hence we want that $n!\geq 2^{1+2(1+d-\eta)/\eta)}$, which is satisfied if $|\Alt_n|\geq 2^{1+2(1+d-\eta)/\eta)}$, and so $C_0:=2^{1+2(1+d-\eta)/\eta)}$ does the job. Since by Lemma \ref{similarGroupsLem}, we also need to ensure that every automorphism of $\Alt_n$ is induced by an automorphism of $\Sym_n$, we need to set $C:=\max\{2520,C_0\}$. With this choice for $C$, an application of Lemma \ref{similarGroupsLem} yields that a possible niceness tuple of the class of finite alternating groups for $w$ is $(\Aut,\max\{N,C\},\eta/2)=(\Aut,\max\{N,2520,C_0\},\eta/2)=(\Aut,\max\{N,2^{1+2(1+d-\eta)/\eta}\},\eta/2)$, as required.
\end{proof}

As for the classical groups of Lie type $X(q)$ with which we are concerned in Theorem \ref{simpleGroupsTheo}(2), the groups \enquote{closely related} with them which we will study are, just as in \cite[beginning of Section 3]{LS12a}, the isometry groups of trivial, perfect symmetric, perfect anti-symmetric or perfect Hermitian pairings (depending on the case) of a vector space over either the field $\IF_q$ or (only in the Hermitian case) its degree $2$ extension $\IF_{q^2}$. Set $E:=\IF_q$, and moreover, set $F:=E$ except in the Hermitian case, where $F:=\IF_{q^2}$.

In the notation of \S2.1 in Kleidman and Liebeck's book \cite{KL90a}, the classical simple Lie type group $X(q)$ is $\overline{\Omega}$ and is the projective version of a subgroup $\Omega$ of the associated isometry group, which is denoted by $I$. $I$, in turn, is contained as a normal subgroup in some group $A$ which (by its conjugation action on $I$) may be viewed as a subgroup of $\Aut(I)$ and is just the group $\Gamma$ of collineations over $I$ except when $I=\GL_n(q)$ is the isometry group of a trivial form, in which case $A$ is the subgroup of $\Aut(I)$ generated by $\Gamma$ and the inverse-transpose automorphism of $I$. For later purposes, we set $\A(I):=A$.

It follows from \cite[Theorem 2.1.4]{KL90a} that if the untwisted Lie rank of $\overline{\Omega}$ is at least $5$ (this is just to exclude the groups $\C_2(2^f)=\Sp_4(2^f)$ and $\D_4(q)=\Omega_8^+(q)$), then every automorphism of $\overline{\Omega}$ is induced by the restriction to $\Omega$ of an automorphism of $I$ from $A=\A(I)$, as required in Lemma \ref{similarGroupsLem}. Furthermore, as Larsen and Shalev observe in \cite[beginning of Section 3]{LS12a}, we always have $|I|\leq (q-1)(r+1)|\overline{\Omega}|$, where $r$ is the untwisted Lie rank of $\overline{\Omega}$ and $q=|F|$. They also observe that for every $\epsilon>0$, $|\overline{\Omega}|^{\epsilon}\geq(r+1)(q-1)$ if $r$ is sufficiently large. This \enquote{sufficiently large} can be made explicit:

\begin{lemmma}\label{explicitLem}
For all $\epsilon>0$, the following holds: With notation as above, if $r\geq\epsilon^{-1}$, then $|\overline{\Omega}|^{\epsilon}\geq(r+1)(q-1)$, and so $|I|\leq|\overline{\Omega}|^{1+\epsilon}$.
\end{lemmma}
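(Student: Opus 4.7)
The second inequality $|I| \le |\overline{\Omega}|^{1+\epsilon}$ is an immediate consequence of the first together with the already-noted estimate $|I| \le (q-1)(r+1)|\overline{\Omega}|$, so the real task is to establish $|\overline{\Omega}|^{\epsilon} \ge (r+1)(q-1)$. Since $|\overline{\Omega}| \ge 1$ and $r \ge 1/\epsilon$ gives $\epsilon \ge 1/r$, one has $|\overline{\Omega}|^{\epsilon} \ge |\overline{\Omega}|^{1/r}$, so it is enough to prove the $\epsilon$-free statement
\[
|\overline{\Omega}| \ge \bigl((r+1)(q-1)\bigr)^r.
\]

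The plan is to verify this inequality by a family-by-family computation based on the explicit order formulas for the finite classical simple groups as tabulated in Kleidman--Liebeck \cite{KL90a}, \S2.1. In each of the six families $A_r, {}^2A_r, B_r, C_r, D_r, {}^2D_r$, the order $|\overline{\Omega}|$ is a product of cyclotomic-type factors $q_0^i - \varepsilon_i$ divided by the order of the centre; applying the elementary bound $q_0^i - \varepsilon_i \ge q_0^{i-1}(q_0 - 1)$ factor by factor and summing exponents yields a closed-form lower bound of the form $|\overline{\Omega}| \ge q_0^{c(r)}(q_0-1)^{r}/(r+1)$ with $c(r)$ quadratic in $r$. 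After translating to the authors' convention $q = |F|$ (so $q = q_0$ in the untwisted cases and $q = q_0^2$ in the Hermitian case), this comfortably dominates $\bigl((r+1)(q-1)\bigr)^r$ by a couple of applications of the elementary estimate $q^r \ge r+1$ (valid for all integers $q \ge 2$, $r \ge 1$).

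The main obstacle I foresee is the narrow margin in the unitary family ${}^2A_r$: the substitution $q_0 = q^{1/2}$ halves the exponent in the asymptotic lower bound, and for the very smallest simple unitary groups (especially $\PSU_3(3)$, corresponding to $r = 2$, $q = 9$) the estimate is only comfortable, not automatic, so a direct numerical check is needed. All other cases are asymptotically loose by a factor exponential in $r^2$, and the uniform argument sketched above goes through once the finitely many small-rank or small-field exceptions have been dispatched by inspection.
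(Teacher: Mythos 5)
Your proposal is correct and takes essentially the same route as the paper: the paper likewise reduces the claim to the $\epsilon$-free inequality $|\overline{\Omega}|^{1/r}\geq(r+1)(q-1)$ (equivalently $|\overline{\Omega}|\geq((r+1)(q-1))^r$) and verifies it in each of the six classical families using the known order formulas, with the second inequality following from the previously noted bound $|I|\leq(q-1)(r+1)|\overline{\Omega}|$. Your additional care with the unitary family and small ranks merely fleshes out what the paper dismisses as easily verified.
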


\begin{proof}
This follows from $|\overline{\Omega}|^{1/r}\geq (r+1)(q-1)$, which can be easily verified in each of the six cases $\overline{\Omega}=\A_r(q),\B_r(q),\C_r(q),\D_r(q),\leftidx{^{2}}\A_r(q),\leftidx{^{2}}\D_r(q)$ using the known formula for $|\overline{\Omega}|$.
\end{proof}

We can now show the following:

\begin{lemmma}\label{omegaILem}
Let $N,\eta>0$ and $\G$ a class of finite groups consisting only of the isometry groups $I$ associated with the members of a subclass $\Hcal$ of the class of finite classical simple Lie type groups $\overline{\Omega}$ of untwisted Lie rank at least $\max\{5,2(1+d-\eta)/\eta\}$. Assume that $\G$ is $(\A,N,\eta)$-nice for $w$. Then $\Hcal$ is $(\Aut,N,\eta/2)$-nice for $w$.
\end{lemmma}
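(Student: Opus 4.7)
The plan is to derive this from a direct application of Lemma \ref{similarGroupsLem}, with the roles of its $\G$ and $\Hcal$ played respectively by the present $\Hcal$ (the classical simple groups $\overline{\Omega}$) and the present $\G$ (their associated isometry groups $I$). The parameters $N$ and $\eta$ are inherited unchanged from the niceness hypothesis on $\G$, and the parameter $\epsilon := \eta/(2(1+d-\eta))$ comes from Lemma \ref{similarGroupsLem} itself.

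For each $\overline{\Omega} \in \Hcal$ I would take the associated $I \in \G$ as the auxiliary group $H$ and verify the three hypotheses of Lemma \ref{similarGroupsLem} in turn. First, $\P_w^{(\A(I))}(I) \leq |I|^{d-\eta}$ for $|I| \geq N$ is precisely the hypothesis that $\G$ is $(\A, N, \eta)$-nice. Second, the size bound $|I| \leq |\overline{\Omega}|^{1+\epsilon}$ is supplied by Lemma \ref{explicitLem}, whose hypothesis $r \geq \epsilon^{-1} = 2(1+d-\eta)/\eta$ is exactly one of the two inequalities baked into the rank assumption on $\Hcal$. Third, for the characteristic section and automorphism conditions, I would take $H' := \Omega$ and $K := \Omega \cap Z(I)$, so that $\overline{\Omega} \cong \Omega/K$; both $\Omega$ and $K$ are characteristic in $I$ (since $\Omega$ is in each case a characteristic subgroup of $I$ and the center of a characteristic subgroup is characteristic), and by \cite[Theorem 2.1.4]{KL90a} every automorphism of $\overline{\Omega}$ is induced by the restriction to $\Omega$ of a suitable element of $\A(I)$, provided $r \geq 5$, which is the other inequality built into the rank assumption.

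Because both numerical thresholds on the rank ($5$ and $\epsilon^{-1}$) have been absorbed into the definition of $\Hcal$, the constant $C$ of Lemma \ref{similarGroupsLem} can be chosen arbitrarily small, so that $\max\{N, C\} = N$ and the conclusion $\P_w(\overline{\Omega}) \leq |\overline{\Omega}|^{d-\eta/2}$ holds for every $\overline{\Omega} \in \Hcal$ with $|\overline{\Omega}| \geq N$; this is exactly the claim that $\Hcal$ is $(\Aut, N, \eta/2)$-nice for $w$. The main subtlety is purely notational: keeping straight that the reduction goes from the quotients $\overline{\Omega}$ down to the isometry groups $I$, so that the present $\Hcal$ plays the role of $\G$ in Lemma \ref{similarGroupsLem} rather than the role suggested by the name. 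Beyond this bookkeeping the proof is a routine assembly of Lemma \ref{explicitLem} with the cited structural result on automorphisms of classical simple groups.
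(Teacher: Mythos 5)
Your proposal is correct and follows essentially the same route as the paper: both apply Lemma \ref{similarGroupsLem} with the roles swapped (the simple groups $\overline{\Omega}$ as its $\G$, the isometry groups $I$ as its $\Hcal$), using the rank bound $r\geq\epsilon^{-1}$ via Lemma \ref{explicitLem} for $|I|\leq|\overline{\Omega}|^{1+\epsilon}$ and the rank bound $r\geq 5$ via \cite[Theorem 2.1.4]{KL90a} for the characteristic-section and automorphism-lifting condition, with the paper simply taking $C:=1$ where you note $C$ may be chosen small. Your explicit identification of $H':=\Omega$ and $K:=\Omega\cap Z(I)$ just spells out what the paper delegates to the remarks preceding Lemma \ref{explicitLem}.
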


\begin{proof}
By the assumption on the untwisted Lie rank of members of $\Hcal$, the assumptions of Lemma \ref{similarGroupsLem} with $C:=1$ are satisfied; more precisely, fixing an element $\overline{\Omega}\in\Hcal$:

\begin{itemize}
\item Since the untwisted Lie rank of $\overline{\Omega}$ is at least $5$, by the observations before Lemma \ref{explicitLem}, considering the associated isometry group $I\in\G$, $\overline{\Omega}$ is a characteristic section of $I$ such that every automorphism of $\overline{\Omega}$ \enquote{comes from} an automorphism from $\A(I)\leq\Aut(I)$.
\item Furthermore, since the untwisted Lie rank of $\overline{\Omega}$ is at least $2(1+d-\eta)/\eta=\epsilon^{-1}$, by Lemma \ref{explicitLem}, we also have $|I|\leq|\overline{\Omega}|^{1+\epsilon}$.
\end{itemize}

Hence we are done by an application of Lemma \ref{similarGroupsLem}.
\end{proof}

We now give the aforementioned theorem to which Theorem \ref{simpleGroupsTheo} reduces:

\begin{theoremm}\label{alternativeTheo}
Let $w$ be a reduced word of length $l\geq1$ in $d$ distinct variables, and let $M$ be as in Notation \ref{mNot}. Then the following hold:

\begin{enumerate}
\item The class of finite symmetric groups is $(\Aut,\lceil256l^{16}\e^{16Md-2}\rceil!,1/(8M))$-nice for $w$.
\item The class of isometry groups associated with the finite simple groups of Lie type of untwisted Lie rank at least $72(d+1)^2l^2$ is $(\A,1,1/(36(d+1)l^2))$-nice for $w$.
\end{enumerate}
\end{theoremm}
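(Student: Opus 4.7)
The plan is to adapt the proof of \cite[Theorem 1.1]{LS12a} in its first two cases—large alternating groups and classical Lie type groups of large rank—to the automorphic setting, while tracking all constants explicitly. Write $w = x_1^{\epsilon_1} \cdots x_l^{\epsilon_l}$ and fix automorphisms $\alpha_1, \ldots, \alpha_l$ from the appropriate subgroup of $\Aut(G)$, together with an element whose fiber under $w_G^{(\alpha_1, \ldots, \alpha_l)}$ realizes $\P_w^{(A)}(G)$; by Lemma \ref{mainLem}(2) we may take the target to be $1$, since in both parts the prescribed automorphism subgroup contains $\Inn(G)$.

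For Part (1), I would follow the combinatorial reduction in \cite[Section 2]{LS12a}. The crucial simplification is that for $n \geq 7$, every automorphism of $\Sym_n$ is inner, so each $\alpha_i$ is conjugation by some fixed $\tau_i \in \Sym_n$. Consequently the equation $w_{\Sym_n}^{(\alpha_1, \ldots, \alpha_l)}(g_1, \ldots, g_d) = 1$ can be rewritten as a fixed-word equation $g_{\iota(1)}^{\epsilon_1} c_1 g_{\iota(2)}^{\epsilon_2} c_2 \cdots g_{\iota(l)}^{\epsilon_l} c_l = 1$ for some explicit constants $c_1, \ldots, c_l \in \Sym_n$ depending on the $\tau_j$. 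Larsen and Shalev's counting argument, which recursively conditions on the partial product of the first few $g_{\iota(j)}^{\epsilon_j} c_j$ factors and controls the number of permutations compatible with a prescribed cycle structure, then applies essentially unchanged. The tower $M = 1 + 2l(d+1) + \cdots + (2l(d+1))^{2l+2}$ arises from iterating this reduction $2l + 2$ times at a branching factor of $2l(d+1)$; the threshold $\lceil 256 l^{16} \e^{16Md - 2} \rceil !$ is chosen so that Stirling's formula converts the raw combinatorial bound into the clean exponent $1/(8M)$ in base $|\Sym_n|$.

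For Part (2), I would analogously follow \cite[Section 3]{LS12a}, working inside the isometry group $I$ with $\alpha_i$ ranging over $\A(I) = A$. Every element of $A$ factors, modulo $\Inn(I)$, as a composition of diagonal, field, graph, and (in the linear case) inverse-transpose automorphisms. The backbone of Larsen and Shalev's approach is a set of rank estimates showing that for generic choices of $g_j \in I$, suitable matrix subexpressions of $w(g_1, \ldots, g_d)$ in the natural representation have close to maximal rank; an equation $w = 1$ on a large fiber is then incompatible with these genericity statements once the rank $r$ is large. Each of the above automorphism classes preserves rank up to a change of basis (possibly after a field automorphism of $\IF_q$ or $\IF_{q^2}$), so the rank estimates transfer to the automorphic setting, only with slightly worse constants. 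Optimizing the trade-off between the rank lower bound (linear in $r$) and the combinatorial overhead (quadratic in $l$, linear in $d+1$), and using $|I| \leq (r+1)(q-1)|\overline{\Omega}|$, yields the threshold $r \geq 72(d+1)^2 l^2$ and the exponent $1/(36(d+1)l^2)$.

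The main obstacle will be the explicit tracking of constants. Larsen and Shalev work asymptotically and absorb lower-order contributions freely into $o(1)$ terms, while here every level of their recursion contributes a factor of $2l(d+1)$ to the exponent tower, and every conjugation introduced by an $\alpha_i$ must be absorbed into a modified constant $c_i$ without losing control of the combinatorial or rank estimates. The conceptual content of the proof remains theirs; the new work is verifying that the twists by $\alpha_1, \ldots, \alpha_l$ do not disrupt the underlying counting and rank bounds, and then calibrating the resulting exponents and thresholds to match the numerical bounds in the theorem.
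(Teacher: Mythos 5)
For Part (1) your plan coincides with the paper's: reduce to the fiber of the identity via Lemma \ref{mainLem}(2), use that all automorphisms of $\Sym_n$ ($n\geq 7$) are inner to turn the twists into explicit permutations, and rerun Larsen--Shalev's trajectory counting; your reading of $M$ as the size of a ball of radius $2l+2$ at branching factor $2l(d+1)$ is exactly how it enters (the paper encodes the twists via the maps $\t(\alpha_i)^{\pm1}$, which is what raises the branching factor from $2ld$ to $2l(d+1)$ and forces the dependence notion and the reducedness argument to be redone relative to these maps). Your description of the LS count as being about ``prescribed cycle structure'' is off --- it is a count of maps $\Delta\rightarrow\Delta$ subject to independent pointwise conditions --- but the strategy is the right one and matches Subsection \ref{subsec3P3}.

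Part (2) contains a genuine gap. First, the high-rank case of \cite{LS12a} is not a ``generic rank of matrix subexpressions'' argument; it is the same trajectory argument run on vectors of the natural module, with the stabilizer bound (Lemma \ref{stabilizerLem}) doing the counting, so the thing you must verify is that the pointwise conditions extracted from trajectories can still be rewritten as conditions on the untwisted $g_k$ and still counted. Your claim that every relevant automorphism ``preserves rank up to a change of basis (possibly after a field automorphism), so the estimates transfer with slightly worse constants'' skips precisely the step that is not routine: for $I=\GL_n(q)$ the inverse-transpose automorphism $\tau$ is not induced by any semilinear permutation of $\Delta=\IF_q^n$, so a condition $\alpha(g)(v)=w$ with $\alpha\in\A(G)\setminus\B(G)$ cannot be rewritten as a condition on where $g$ sends some vector; it becomes a condition on $g^t$ (Lemma \ref{rewritingLem}(2)). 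The trajectory constraints then mix column conditions on $g_k$ with row conditions on $g_k^t$, the stabilizer bound no longer applies, and one needs a separate counting statement (Lemma \ref{substituteLem}), whose extra term $q^{r_1r_2}$ is the reason the exponent degrades to $1/(36(d+1)l^2)$ and the rank threshold to $72(d+1)^2l^2$ --- indeed it is the only reason $d$ appears in the Part (2) constants at all, since for the other isometry groups the paper proves the stronger, $d$-free bound $|G|^{d-1/(72l^2)}$. In addition, for all isometry groups the field automorphisms make the maps $\t(\alpha_k)$ only semilinear, so the span conditions defining the set $X$ must carry $\sigma_k^{-1}$-twisted coefficients; this is manageable, but it is bookkeeping that has to be done explicitly, not absorbed into ``slightly worse constants.'' Without these two ingredients your outline does not yield the stated niceness tuple for the general linear case.
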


Let us actually derive Theorem \ref{simpleGroupsTheo} from this.

\begin{proof}[Proof of Theorem \ref{simpleGroupsTheo} using Theorem \ref{alternativeTheo}]
For (1): Applying Lemma \ref{altSymLem}, we get from Theorem \ref{alternativeTheo}(1) that the class of finite alternating groups has the following niceness tuple for $w$:

\[
(\Aut,\max\{\lceil256l^{16}\e^{16Md-2}\rceil!,2^{1+2(1+d-\eta)/\eta}\},1/(16M)),
\]

where $\eta=1/(8M)$. It is not difficult to check that the second term in the maximum expression in the second entry of the tuple is smaller than the first term, and we are done.

For (2): By Lemma \ref{omegaILem} and Theorem \ref{alternativeTheo}(2), we only need to check that for $\eta=1/(36(d+1)l^2)$, we have $2(1+d-\eta)/\eta\leq 72(d+1)^2l^2$, which is elementary.
\end{proof}

\subsection{First part of the proof of Theorem \ref{alternativeTheo}: Symmetric groups and isometry groups other than general linear groups}\label{subsec3P3}

We now turn to the proof of Theorem \ref{alternativeTheo}, which as mentioned before, is a modification of an argument by Larsen and Shalev from \cite{LS12a}. Let us first make some general observations which will be used in the proof.

Note that each of the abstract groups $G$ with which Theorem \ref{alternativeTheo} deals can actually be viewed as a permutation group, acting on a set $\Delta$, in a natural way: each symmetric group $\Sym_n$ through its natural action on the set $\{1,\ldots,n\}$, and each isometry group through its action on the corresponding vector space. Larsen and Shalev also exploited this fact, and their argument consisted essentially in investigating to what extent a relation of the form $w(g_1,\ldots,g_d)=g$ for $g\in G$ fixed imposes restrictions on $g_1,\ldots,g_d\in G$ when viewed as maps $\Delta\rightarrow\Delta$.

In our setting, this gets more complicated because we are actually considering not a word equation in $g_1,\ldots,g_d$, but a word equation in various images of the $g_i$ under fixed automorphisms of $G$ from some subgroup $\A(G)\leq\Aut(G)$. Hence it would be useful if, from some single piece of mapping information of the form $\alpha(g_i)(x)=y$, $\alpha\in\A(G)$ and $x,y\in\Delta$, we could derive such a condition on $g_i$ itself. It turns out that this is actually possible for the $G$ with which we are concerned except for the case $G=\GL_n(q)$, which will require some separate treatment.

In this subsection, we deal with the $G$ not isomorphic with any $\GL_n(q)$.

\begin{nottation}\label{tNot}
We introduce the following notation:

\begin{enumerate}
\item As $G=\Sym_n$ with $n\geq 7$ is complete, for each automorphism $\alpha$ of $G$, there is a unique $\sigma\in G$ such that $\alpha=\conj(\sigma)$. We set $\t(\alpha):=\sigma^{-1}\in\Sym_n=\Sym_{\Delta}$ with $\Delta=\{1,\ldots,n\}$.
\item Let $G=I$ be the isometry group of either a perfect symmetric, perfect anti-symmetric or perfect Hermitian pairing of a finite vector space $\Delta=\IF_{q^e}^m$, where $e=1$ in the symmetric and anti-symmetric case and $e=2$ in the Hermitian case. By the definition of $\A(I)\leq\Aut(I)$ above, it is clear that every element $\alpha\in\A(I)$ is of the form $\conj(U)\circ\aut(\sigma)$, where $U\in I$ and $\aut(\sigma)$ is a field automorphism of $I$, induced by an automorphism $\sigma$ of $\IF_{q^e}$. $\sigma$ also induces a permutation $\perm(\sigma)$ on $\Delta$, namely the map $\Delta\rightarrow \Delta,(x_1,\ldots,x_m)\mapsto(\sigma(x_1),\ldots,\sigma(x_m))$. We set $\t(\alpha):=(U\circ\perm(\sigma))^{-1}\in\Sym_{\Delta}$.
\end{enumerate}
\end{nottation}

The point behind Notation \ref{tNot} is that in each of the cases considered, the automorphism $\alpha$ of $G$ can be seen as the restriction of the conjugation by $\t(\alpha)^{-1}\in\Sym_{\Delta}$ to the subgroup $G$ of $\Sym_{\Delta}$. Hence the following is clear:

\begin{lemmma}\label{mappingLem}
Let $G$ be $\Sym_n$ for some $n\geq 7$ resp.~an isometry group as in Notation \ref{tNot}(2). Then for every $\alpha\in\Aut(G)$ (resp. $\alpha\in\A(G)$), for every $g\in G$, and for all $x,y\in\Delta$, the set on which $G$ acts naturally, we have $\alpha(g)(x)=y$ if and only if $g(\t(\alpha)(x))=\t(\alpha)(y)$.\qed
\end{lemmma}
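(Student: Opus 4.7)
The plan is to unwind Notation \ref{tNot} and verify, in both cases, the single underlying claim: that the restriction of $\alpha$ to $G\leq\Sym_\Delta$ coincides with the restriction to $G$ of conjugation by $\t(\alpha)^{-1}$ inside $\Sym_\Delta$. Once this is established, the asserted equivalence is just the statement that for $T\in\Sym_\Delta$ and $g\in\Sym_\Delta$, $(TgT^{-1})(x)=y$ iff $g(T^{-1}(x))=T^{-1}(y)$, with $T=\t(\alpha)^{-1}$.

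In Case 1 ($G=\Sym_n$, $n\geq 7$), completeness gives $\alpha=\conj(\sigma)$ for a unique $\sigma\in G$, and by definition $\t(\alpha)=\sigma^{-1}$. Thus for every $g\in G$ and $x\in\Delta=\{1,\dots,n\}$,
\[
\alpha(g)(x)=\sigma\bigl(g(\sigma^{-1}(x))\bigr)=\t(\alpha)^{-1}\bigl(g(\t(\alpha)(x))\bigr),
\]
and the stated equivalence is immediate.

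In Case 2, write $\alpha=\conj(U)\circ\aut(\sigma)$ with $U\in I$ and $\sigma\in\Gal(\IF_{q^e}/\IF_p)$, so that $\t(\alpha)=(U\circ\perm(\sigma))^{-1}$. The one point to check is that the field automorphism $\aut(\sigma)$ of $I$ acts on $\Delta$ by conjugation with $\perm(\sigma)$; this is the elementary observation that if $g\in I$ is the matrix $(g_{ij})$, then $\aut(\sigma)(g)$ has entries $\sigma(g_{ij})$, and for any $v\in\Delta=\IF_{q^e}^m$ one has
\[
\aut(\sigma)(g)(v)=\perm(\sigma)\bigl(g(\perm(\sigma)^{-1}(v))\bigr),
\]
since applying $\sigma$ entry-wise to a matrix-vector product equals applying it to each factor separately. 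Setting $T:=U\circ\perm(\sigma)=\t(\alpha)^{-1}$, we therefore get
\[
\alpha(g)(v)=U\bigl(\aut(\sigma)(g)(U^{-1}(v))\bigr)=T\bigl(g(T^{-1}(v))\bigr),
\]
and the claimed equivalence follows exactly as in Case 1.

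The only genuine (though minor) obstacle is the verification that $\aut(\sigma)$ acts on $\Delta$ as $\perm(\sigma)$-conjugation; in the two non-trivial variants (orthogonal/symplectic and Hermitian) one must trust that the definition of $\aut(\sigma)$ from \cite{KL90a} really is entry-wise $\sigma$-application on the matrices in $I$, which holds because $I$ is defined inside $\GL_m(\IF_{q^e})$ as the stabilizer of a pairing whose Gram matrix has entries in the fixed field of $\sigma$. After that observation, both cases reduce to the same one-line commutation, and nothing further is needed.
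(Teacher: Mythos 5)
Your proposal is correct and follows the same route as the paper, which proves the lemma simply by observing (right before its statement) that in each case $\alpha$ is the restriction to $G\leq\Sym_\Delta$ of conjugation by $\t(\alpha)^{-1}$; your write-up merely makes this verification explicit, including the check that $\aut(\sigma)$ acts on $\Delta$ as conjugation by $\perm(\sigma)$.
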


At last, we are now ready to discuss the proofs of Theorem \ref{alternativeTheo}(1) and of Theorem \ref{alternativeTheo}(2) except for general linear groups; we will present these proofs one after the other.

\begin{proof}[Proof of Theorem \ref{alternativeTheo}(1)]
Let $G=\Sym_n$ with $n\geq 256l^{16}\e^{16Md-2}$. We need to show that $\P_w(G)\leq |G|^{d-1/(8M)}$. By Lemma \ref{mainLem}(2), we know that $\P_w(G)=\P_w(G,1)$, so we only need to bound the maximum size of the fiber $\Phi$ of $1=\id$ under an automorphic word map on $G$. Hence fix automorphisms $\alpha_1,\ldots,\alpha_l\in\Aut(G)\cong G$, and, as usual, write $w=x_1^{\epsilon_1}\cdots x_l^{\epsilon_l}$ with $\epsilon_i\in\{\pm1\}$, $x_1,\ldots,x_l\in\{X_1,\ldots,X_d\}$ and $\iota:\{1,\ldots,l\}\rightarrow\{1,\ldots,d\}$ such that $x_i=X_{\iota(i)}$.

We associate with each fixed $d$-tuple $\vec{g}=(g_1,\ldots,g_d)\in G^d$ a certain metric $d_{\vec{g}}^{(\alpha_1,\ldots,\alpha_l)}$ on $\Delta=\{1,\ldots,n\}$, as follows: For $y,z\in\Delta$, if $z$ can be obtained from $y$ through a finite number of applications of permutations on $\Delta$, each of one of the two forms

\begin{itemize}
\item $\alpha_i(g_j)^{\pm 1}$, where $i\in\{1,\ldots,l\}$ and $j\in\{1,\ldots d\}$, or
\item $\t(\alpha_i)^{\pm 1}$, where $i\in\{1,\ldots,l\}$,
\end{itemize}

then $d_{\vec{g}}^{(\alpha_1,\ldots,\alpha_l)}(y,z)$ is defined as the smallest number of such function applications which it takes to pass from $y$ to $z$. If, on the other hand, $z$ cannot be obtained from $y$ in this way, we set $d_{\vec{g}}^{(\alpha_1,\ldots,\alpha_l)}(y,z):=n$. It is easy to check that $d_{\vec{g}}^{(\alpha_1,\ldots,\alpha_l)}$ really is a metric on $\Delta$.

We call elements $y,z\in\Delta$ \emph{independent} if and only if there do not exist $\alpha,\beta\in\{\alpha_1,\ldots,\alpha_l\}$ such that $\t(\alpha)(y)=\t(\beta)(z)$, and else we call them \emph{dependent}. Furthermore, we define $u_j:=x_{l-j+1}^{\epsilon_{l-j+1}}\cdots x_l^{\epsilon_l}$ for $j=0,\ldots,l$ (the terminal segments of $w$), so that $x_{l-j}^{\epsilon_{l-j}}u_j=u_{j+1}$. Finally, we set $\nu_j:=(u_j)_G^{(\alpha_{l-j+1},\ldots,\alpha_l)}(\vec{g})\in G$, $j=0,\ldots,l$, and $L:=\lfloor n/(4M)\rfloor$.

Now let $\vec{z}=(z_1,\ldots,z_L)$ denote an ordered $L$-tuple of elements $\Delta$. We consider two sets $X$ and $X'$:

\begin{align}\label{xEq}
X := \{(\vec{g},\vec{z})\in G^d\times\Delta^L\mid &\forall i\not=j: d_{\vec{g}}^{(\alpha_1,\ldots,\alpha_l)}(z_i,z_j)>2l+2 \text{ and } \notag \\
&\forall i: |\{z_i,\nu_1(z_i),\ldots,\nu_l(z_i)\}|\leq l\},
\end{align}

\begin{align}\label{xPrimeEq}
X' := \{(\vec{g},\vec{z})\in G^d\times\Delta^L\mid &\forall i\not=j: d_{\vec{g}}^{(\alpha_1,\ldots,\alpha_l)}(z_i,z_j)>2l+2 \text{ and } \notag \\
&\forall i: \exists j_1,j_2\in\{0,\ldots, l\}: (j_1\not=j_2 \text{, and }\nu_{j_1}(z_i) \text{ and } \nu_{j_2}(z_i) \notag \\
&\text{ are dependent})\}.
\end{align}

Note that the second condition, $|\{z_i,\nu_1(z_i),\ldots,\nu_l(z_i)\}|\leq l$, in Formula (\ref{xEq}) just means that two of the elements $z_i,\nu_1(z_i),\ldots,\nu_l(z_i)$ are equal, which is a stronger condition than the second condition in Formula (\ref{xPrimeEq}). Hence $X\subseteq X'$. Our goal is to determine an upper bound on $|X|$, and to this end, we bound $|X'|$.

We begin by fixing two $L$-tuples $(a_1,\ldots,a_L)$ and $(b_1,\ldots,b_L)$ of non-negative integers such that $a_i<b_i\leq l$ for all $i$, as well as two $L$-tuples $(\gamma_1,\ldots,\gamma_L)$ and $(\delta_1,\ldots,\delta_L)$ with entries from the set $\{\alpha_1,\ldots,\alpha_l\}$. There are fewer than $l^{4L}$ choices for this.

For each such choice, we count only the elements $(\vec{g},\vec{z})$ of $X'$ such that for all $i\leq L$, the elements $z_i,\nu_1(z_i),\ldots,\nu_{b_i-1}(z_i)\in\Delta$ are pairwise independent, while the dependence relation $\t(\gamma_i)(\nu_{b_i}(z_i))=\t(\delta_i)(\nu_{a_i}(z_i))$ holds. There are fewer than $n^{b_1+\cdots+b_L}$ ways of choosing ordered tuples $Z_1,\ldots,Z_L$ with entries from $\Delta$ and of length $b_1,\ldots,b_L$ respectively such that the entries of each tuple are pairwise independent. For fixed $(Z_1,\ldots,Z_L)$, we count only elements of $X'_{Z_1,\ldots,Z_L}$, i.e., only elements of $X'$ as specified above such that additionally, $(z_i,\nu_1(z_i),\ldots,\nu_{b_i-1}(z_i))=Z_i$ for each $i=1,\ldots,L$.

Now the distance condition in Formula (\ref{xPrimeEq}) implies that if any coordinate of $Z_i$ is in dependence with any coordinate of $Z_j$ for $i\not=j$, then $X'_{Z_1,\ldots,Z_L}=\emptyset$. We may therefore assume that coordinates of $Z_i$ and $Z_j$, $i\not=j$, are always independent, a feature which we call the \emph{inter-independence of the $Z_i$}.

Note that for each $i\in\{1,\ldots,L\}$ and each $j\in\{1,\ldots,b_i\}$, we get the following condition on one of the functions $g_1,\ldots,g_d:\Delta\rightarrow\Delta$:

\begin{itemize}
\item if $\epsilon_{l-j+1}=+1$ and $\iota(l-j+1)=k$, then $\alpha_{l-j+1}(g_k)(\nu_{j-1}(z_i))=\nu_j(z_i)$, or equivalently (by Lemma \ref{mappingLem}) $g_k(\t(\alpha_{l-j+1})(\nu_{j-1}(z_i)))=\t(\alpha_{l-j+1})(\nu_j(z_i))$.
\item if $\epsilon_{l-j+1}=-1$ and $\iota(l-j+1)=k$, then $\alpha_{l-j+1}(g_k)(\nu_j(z_i))=\nu_{j-1}(z_i)$, or equivalently $g_k(\t(\alpha_{l-j+1})(\nu_j(z_i)))=\t(\alpha_{l-j+1})(\nu_{j-1}(z_i))$.
\end{itemize}

Let us introduce some terminology for conditions of the form $f(x)=y$, where $f$ is a variable standing for a function $\Delta\rightarrow\Delta$ and $x,y\in\Delta$ are fixed. We call $x$ the \emph{argument} and $y$ the \emph{image in the condition $f(x)=y$}. Call two such conditions $f(x_1)=y_1$ and $g(x_2)=y_2$ \emph{independent} if and only if either $f$ and $g$ are distinct variables or $f=g$ and $x_1\not=x_2$. Two conditions that are not independent are called \emph{dependent}. Finally, the conditions $f(x_1)=y_1$ and $g(x_2)=y_2$ are called \emph{contradictory} if and only if $f=g$, $x_1=x_2$ and $y_1\not=y_2$.

Equipped with this terminology, we note that for fixed $i$, either are two of the $b_i$ conditions on the $g_k$ derived above contradictory (so that $X'_{Z_1,\ldots,Z_L}=\emptyset$ in this case as well), or the conditions are pairwise independent. To see this, note that if the conditions are not pairwise independent, then since we are assuming that $z_i,\nu_1(z_i),\ldots,\nu_{b_i-1}(z_i)$ are pairwise independent elements of $\Delta$ (in the sense defined before Formula (\ref{xEq})), the existing pair of dependent conditions is unique, and one of the two conditions has an image of the form $\t(\alpha)(\nu_j(z_i))$ with $1\leq j\leq b_i-1$, and the other condition is $g_k(\t(\alpha_{l-b_i+1})(\nu_{b_i}(z_i)))=\t(\alpha_{l-b_i+1})(\nu_{b_i-1}(z_i))$. Now since no two consecutive terms in the sequence $x_l^{\epsilon_l},\ldots,x_1^{\epsilon_1}$ are mutually inverse in the corresponding free group, we must have $j<b_i-1$, but this, again by the pairwise independence of $z_i,\nu_1(z_i),\ldots,\nu_{b_i-1}(z_i)$, shows that the images in the two conditions cannot be equal, and so the conditions are contradictory, as we wanted to show.

We may thus assume that for fixed $i$, the $b_i$ conditions listed above are pairwise independent, and the inter-independence of the $Z_i$ then guarantees us that actually all the $b_1+\cdots+b_L$ conditions described above are pairwise independent. As the number of elements of $X'_{Z_1,\ldots,Z_l}$ is bounded from above by the number of $d$-tuples of functions $\Delta\rightarrow\Delta$ satisfying all the $b_1+\cdots+b_L$ conditions above, we conclude that $|X'_{Z_1,\ldots,Z_l}|\leq n^{dn-b_1-\cdots-b_L}$. It follows that

\begin{equation}\label{xUpperBoundEq}
|X|\leq|X'|\leq l^{4L}n^{dn}.
\end{equation}

To get an upper bound on the size of $\Phi$, the fiber of $\id$ under $w_G^{(\alpha_1,\ldots,\alpha_l)}$, from this, note that for each $\vec{g}\in G^d$ lying in that fiber, we have

\[
(\{\vec{g}\}\times\Delta^L)\cap X = \{(\vec{g},\vec{z})\in\{\vec{g}\}\times\Delta^L\mid \forall i\not=j: d_{\vec{g}}^{(\alpha_1,\ldots,\alpha_L)}(z_i,z_j)>2l+2\}.
\]

Now the ball $\B_{2l+2}(z)$ of radius $2l+2$ with respect to the metric $d_{\vec{g}}^{(\alpha_1,\ldots,\alpha_l)}$ around any $z\in\Delta$ has, by definition of $M$, cardinality at most $M$. Furthermore, by definition of $L$, $LM<n/4$. Hence if we select $z_1,\ldots,z_L\in\Delta$ iteratively so that for each $j=1,\ldots,L$,

\[
z_j\in\{z\in\Delta\mid \forall i<j: d_{\vec{g}}^{(\alpha_1,\ldots,\alpha_l)}(z_j,z_i)>2l+2\},
\]

then the number of possibilities for $z_j$ is at least

\[
|\Delta\setminus\bigcup_{i=1}^{j-1}{\B_{2l+2}(z_i)}|\geq n/4.
\]

It follows that

\[
|(\{\vec{g}\}\times\Delta^L)\cap X|\geq (n/4)^L.
\]

Hence we also have a lower bound on the cardinality of $X$:

\begin{equation}\label{xLowerBoundEq}
|X|\geq |(\Phi\times\Delta^L)\cap X|\geq |\Phi|\cdot (n/4)^L.
\end{equation}

Combining Formula (\ref{xLowerBoundEq}) with the upper bound on $|X|$ from Formula (\ref{xUpperBoundEq}), we conclude that

\begin{align}\label{boundOnFEq}
|\Phi| &\leq (n/4)^{-L}l^{4L}n^{dn} \notag \\
&= (\sqrt{2}l)^{4L}n^{dn-L} \notag \\
&\leq (\sqrt{2}l)^{n/M}\cdot n^{1+(d-1/(4M))n}.
\end{align}

From the explicit Stirling-like bound $n!\geq (n/\e)^n$ (which, as noted in \cite{Tao10a}, is an immediate consequence of the Taylor expansion of the exponential function), it is clear from Formula (\ref{boundOnFEq}) that $|\Phi|\leq |G|^{d-1/(8M)}$ as long as

\[
(\sqrt{2}l)^{n/M}\cdot n^{1+(d-1/(4M))n} \leq (n/\e)^{n(d-1/(8M))},
\]

which is equivalent to

\begin{equation}\label{transformedEq}
(\sqrt{2}l)^{1/M}\cdot \e^{d-1/(8M)}\cdot n^{1/n+d-1/(4M)} \leq n^{d-1/(8M)}.
\end{equation}

Now note that our assumption $n\geq 256l^{16}\e^{16Md-2}=(2l^2)^8e^{16Md-2}$ is equivalent to $(\sqrt{2}l)^{1/M}\cdot \e^{d-1/(8M)}\leq n^{1/(16M)}$. Hence by Formula (\ref{transformedEq}), all that we need to finish the proof is to verify that $n^{1/n+d-3/(16M)}\leq n^{d-1/(8M)}$, which is equivalent to $n\geq 16M$, and this is certainly true by our assumption on $n$.
\end{proof}

For the other proof, we require the following lemma, which is essentially \cite[Lemma 3.2]{LS12a}:

\begin{lemmma}\label{stabilizerLem}
Let $G$ be the isometry group, acting naturally on a finite vector space $\Delta$, associated with a classical finite simple group of Lie type $S=X(q)$. Set $E:=\IF_q$, and denote by $F$ the finite field such that $\Delta$ is an $F$-vector space (recall that either $F=E$ or, in the Hermitian case, $F$ is a quadratic extension of $E$). Set $n:=\dim_E(\Delta)$, and let $v_1,\ldots,v_k$ be $E$-linearly independent vectors in $V$ such that $n\geq 2k+2$. Then $|\Stab_G(v_1,\ldots,v_k)|\leq q^{k^2+k-kn}\cdot|G|$.\qed
\end{lemmma}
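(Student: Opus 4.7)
The plan is to combine the orbit--stabilizer theorem with Witt's extension theorem. First, if $\mathcal{O}$ denotes the $G$-orbit of $(v_1,\ldots,v_k)$ in $\Delta^k$, then by orbit--stabilizer $|\Stab_G(v_1,\ldots,v_k)| = |G|/|\mathcal{O}|$, so it suffices to prove the lower bound $|\mathcal{O}| \geq q^{kn - k^2 - k}$. Witt's extension theorem (in the symplectic, orthogonal, and unitary cases, and trivially in the general linear case) identifies $\mathcal{O}$ with the set of all $k$-tuples $(w_1,\ldots,w_k)$ of $E$-linearly independent vectors in $\Delta$ whose Gram matrix, with respect to the relevant form, agrees with the Gram matrix of $(v_1,\ldots,v_k)$ (the Gram condition being empty in the $\GL_n(q)$ case).

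The counting will be carried out inductively, choosing $w_1,\ldots,w_k$ one at a time. For fixed admissible $w_1,\ldots,w_{i-1}$, the condition $\langle w_i, w_j \rangle = \langle v_i, v_j \rangle$ for $j < i$ is an $F$-affine condition on $w_i \in \Delta$ whose joint fiber has $F$-codimension at most $i-1$, and hence $E$-cardinality at least $q^{n - (i-1)\dim_E F}$ by non-degeneracy of the form; the self-pairing condition $\langle w_i, w_i \rangle = \langle v_i, v_i \rangle$ (which is vacuous in the symplectic case) is a quadratic constraint, which by a standard quadric point-count over $\mathbb{F}_q$ costs at most one additional factor of $q$; and the vectors failing the linear-independence requirement $w_i \notin \Span_E(w_1,\ldots,w_{i-1})$ number only $q^{i-1}$, which is dwarfed by the previous estimate thanks to the hypothesis $n \geq 2k+2$. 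The worst case is the Hermitian one, where $\dim_E F = 2$ and each step contributes at least $q^{n - 2i}$ valid choices of $w_i$; multiplying across $i = 1,\ldots,k$ yields $|\mathcal{O}| \geq \prod_{i=1}^k q^{n - 2i} = q^{kn - k(k+1)} = q^{kn - k^2 - k}$, and the symmetric, anti-symmetric, and trivial-form cases produce strictly stronger bounds.

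The main obstacle will be handling the quadric point-count uniformly across the various form types (degenerate quadrics, or small characteristic, could in principle give anomalous values) and verifying non-emptiness of each fiber at every step of the induction. Non-emptiness at step $i$ follows from Witt's theorem applied to the partial tuples $(v_1,\ldots,v_i)$ and $(w_1,\ldots,w_{i-1},?)$, provided the partial Gram data is isometric, which is guaranteed by the inductive construction. The hypothesis $n \geq 2k+2$ is what supplies the necessary slack to ensure that $q^{n - 2i}$ strictly dominates $q^{i-1}$ for all $i \leq k$, so that the linear-independence subtraction can be absorbed without degrading the exponent.
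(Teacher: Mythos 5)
Your central identification --- that Witt's extension theorem lets you equate the orbit $\mathcal{O}$ with the set of \emph{all} $E$-linearly independent $k$-tuples having the prescribed Gram matrix --- is not correct at the level of generality at which the lemma is stated, and this is where the proposal breaks. Witt's theorem produces $F$-linear isometries, so two tuples lie in the same $G$-orbit only if they satisfy the same $F$-linear relations (and, for orthogonal groups in characteristic $2$, only if the quadratic-form values match as well, since the bilinear Gram matrix alone is preserved by the full symplectic group). In the Hermitian case an $E$-independent tuple need not be $F$-independent, and then your identification fails: the orbit of $(v_1,\lambda v_1)$ with $\lambda\in F\setminus E$ consists only of tuples of the form $(w,\lambda w)$, which is far smaller than the Gram-matrix count, while your construction produces $F$-independent tuples that are not in the orbit at all. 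Worse, in this degenerate situation the asserted inequality itself fails: take $G=\GU_4(q)$ (so $n=\dim_E\Delta=8$), $k=2$, $\langle v_1,v_1\rangle=1$ and $v_2=\lambda v_1$; then $\Stab_G(v_1,v_2)=\Stab_G(v_1)\cong\GU_3(q)$ has index $q^3(q^4-1)<q^{10}$, whereas the claimed bound is $q^{k^2+k-kn}|G|=q^{-10}|G|$. So no completion of your argument can work under the literal hypothesis of mere $E$-independence; the Witt route requires the $v_i$ to be linearly independent over $F$. (For context: the paper does not prove this lemma at all --- it imports it as \enquote{essentially} \cite[Lemma 3.2]{LS12a} --- so there is no in-paper proof to compare with, and the delicacy you have run into sits exactly at the point where the statement here deviates from a purely $F$-linear formulation.)

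Even after restricting to $F$-independent tuples, the per-step counting is not yet a proof. First, the nonempty fiber of a (possibly degenerate) quadratic polynomial on a $D$-dimensional $\IF_q$-space can be as small as $(q-1)q^{D-2}$, so \enquote{costs at most one additional factor of $q$} is only true up to a factor you have not budgeted for; and in the Hermitian case your per-step target $q^{n-2i}$ uses up \emph{all} the slack, since $\prod_{i=1}^k q^{n-2i}$ equals the required $q^{kn-k^2-k}$ exactly. Second, the claim that the excluded span is \enquote{dwarfed by the previous estimate thanks to $n\geq 2k+2$} is false as stated: Witt forces you to avoid the $F$-span, of $E$-size $q^{2(i-1)}$, and even the $E$-span of size $q^{i-1}$ exceeds $q^{n-2i}$ as soon as $3i>n+1$, which happens at the late steps whenever $k\geq 4$ and $n$ is close to $2k+2$. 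Rescuing the count there requires showing that the span can only meet the admissible affine quadric in a large set when the previously chosen vectors span a (nearly) totally isotropic subspace with vanishing Gram data, in which case the relevant fibers are correspondingly larger or the configuration is impossible; that case analysis is the real content of the step and is missing from the sketch.
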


\begin{proof}[Proof of Theorem \ref{alternativeTheo}(2) except for general linear groups]
Let $G$ be the isometry group of either a perfect symmetric, perfect anti-symmetric or perfect Hermitian pairing on a finite $F$-vector space $\Delta$. In the first two cases, set $E:=F$, and in the Hermitian case, let $E$ be the unique subfield of $F$ such that $[F:E]=2$. Furthermore, set $q:=|E|$ and $n:=\dim_E(\Delta)$ as well as $m:=\dim_F(\Delta)=n/e$ (with $e$ as in Notation \ref{tNot}(2)), so that w.l.o.g.~$\Delta=\IF_{q^e}^m$ and Notation \ref{tNot}(2) is applicable. Finally, fix $\alpha_1,\ldots,\alpha_l\in\A(G)$.

Under these assumptions, we will actually show something stronger than what is asserted in Theorem \ref{alternativeTheo}(2) for all isometry groups (including the general linear groups), namely that if $n\geq 216l^2$, then the size of the fiber $\Phi$ of $1_G=\id$ under $w_G^{(\alpha_1,\ldots,\alpha_l)}$ is at most $|G|^{d-1/(72l^2)}$ (note that it is sufficient to consider that fiber by Lemma \ref{mainLem}(2), as $\A(G)$ contains $\Inn(G)$). As before, the argument is a modification of a proof of Larsen and Shalev, namely of \cite[proof of Proposition 3.3]{LS12a}. Compared to their situation, we have the advantage that we only need to consider the fiber of $\id$, not of any isometry with an eigenvalue of multiplicity at least $n/3$, so that some parts of the construction even get simpler, while others get more complicated to make them still work for automorphic word maps.

Let $\vec{g}=(g_1,\ldots,g_d)$ denote a $d$-tuple of elements of $G$. We define $u_j$ and $\nu_j$ by the same formulas as in the proof of Theorem \ref{alternativeTheo}(1) above. Furthermore, we set $L:=\lfloor n/(9l^2)\rfloor$ and let $\vec{z}=(z_1,\ldots,z_L)$ denote an $L$-tuple of elements of $\Delta$. We define the lexicographic order ${}\prec{}$ on the set $\{1,\ldots,L\}\times\{0,\ldots,l\}$ through $(i',j')\prec(i,j)$ if and only if $i'<i$, or $i=i'$ and $j'<j$. Finally, we define

\begin{align}\label{xDefEq}
X:=\{(\vec{g},\vec{z})\in G^d\times\Delta^L\mid \forall i:(&z_i\notin\Span_{(i',j')\prec(i,0),k=1,\ldots,l}{\t(\alpha_k)(\nu_{j'}(z_{i'}))} \text{ and } \notag \\
&\nu_l(v_i)\in\Span_{(i',j')\prec(i,l),k=1,\ldots,l}{\t(\alpha_k)(\nu_{j'}(z_{i'}))})\},
\end{align}

where here and in the rest of this proof, for a subset $A\subseteq\Delta$, $\Span{A}$ denotes the $E$-span of $A$ inside $\Delta$.

For each $(\vec{g},\vec{z})\in X$, we define $b_i$ to be the smallest positive integer such that

\begin{equation}\label{spanEq}
\nu_{b_i}(z_i)\in\Span_{(i',j')\prec(i,b_i),k=1,\ldots,l}{\t(\alpha_k)(\nu_{j'}(z_{i'}))}.
\end{equation}

Note that $1\leq b_i\leq l$, and so $b_1+\cdots+b_L\leq lL$. We make Formula (\ref{spanEq}) more explicit by fixing $a_{i,i',j',k}\in E=\IF_q$ such that

\begin{equation}\label{spanEq2}
\nu_{b_i}(z_i)=\sum_{(i',j')\prec(i,b_i),k=1,\ldots,l}{a_{i,i',j',k}\t(\alpha_k)(\nu_{j'}(z_{i'}))}.
\end{equation}

There are fewer than $q^{l^2L^2}l^L$ ways in which the $a_{i,i',j',k}$ and $b_i$ can be chosen:

\begin{itemize}
\item precisely $l^L$ ways for the choice of $b_i$,
\item and less than the following number of ways for the choice of the scalars $a_{i,i',j',k}$ from $E=\IF_q$:

\[
q^{l(b_1+(l+b_2)+(2l+b_3)+\cdots+((L-1)l+b_L))}\leq q^{l(lL+l\cdot L(L-1)/2)}=q^{l^2L(1+(L-1)/2)}<q^{l^2L^2}.
\]
\end{itemize}

Furthermore, there are fewer than $q^{n(b_1+\cdots+b_L)}$ possibilities for the sequence of sequences

\[
\overline{z}=(z_1,\ldots,\nu_{b_1-1}(z_1); z_2,\ldots,\nu_{b_2-1}(z_2); \ldots; z_l,\ldots,\nu_{b_L-1}(z_L))
\]

such that none of the vectors in the sequence lies in the $E$-span of all the vectors obtained by applying one of the $\t(\alpha_k)$, $k=1,\ldots,l$, to one of the previous vectors in the sequence.

We estimate the number of elements $(\vec{g},\vec{z})$ of $X$ for fixed choices of $a_{i,i',j',k}$, $b_i$ and $\overline{z}$. Note that $\vec{z}$ is already fixed now as a part of $\overline{z}$, so we need to bound the number of matching $\vec{g}=(g_1,\ldots,g_d)\in G^d$. Say $\alpha_k=\conj(U_k)\circ\aut(\sigma_k)$ for $k=1,\ldots,l$, where $U_k\in G$ and $\sigma_k$ is an automorphism of $F=\IF_{q^e}$. Note that by the definition of $\t(\alpha_k)$ in Notation \ref{tNot}(2), the map $\t(\alpha_k):\Delta\rightarrow\Delta$ is $F$-semilinear (in the sense of \cite[bottom of p.~9]{KL90a}); more precisely, we have, for all $v,w\in\Delta$ and all $\lambda\in F=\IF_{q^e}$: $\t(\alpha_k)(v+w)=\t(\alpha_k)(v)+\t(\alpha_k)(w)$ and $\t(\alpha_k)(\lambda\cdot v)=\sigma_k^{-1}(\lambda)\cdot\t(\alpha_k)(v)$. Also, note that if $\lambda\in E$, then $\sigma_k^{-1}(\lambda)\in E$ as well.

We get the following $b_1+\cdots+b_L$ conditions on the $g_k$:

For each $i=1,\ldots,L$:

\begin{itemize}
\item for each $j=1,\ldots,b_i-1$:
\begin{itemize}
\item if $\epsilon_{l-j+1}=+1$ and $\iota(l-j+1)=k$: $\alpha_{l-j+1}(g_k)(\nu_{j-1}(z_i))=\nu_j(z_i)$, which by Lemma \ref{mappingLem} is equivalent to $g_k(\t(\alpha_{l-j+1})(\nu_{j-1}(z_i)))=\t(\alpha_{l-j+1})(\nu_j(z_i))$.
\item if $\epsilon_{l-j+1}=-1$ and $\iota(l-j+1)=k$: $\alpha_{l-j+1}(g_k)(\nu_j(z_i))=\nu_{j-1}(z_i)$, which by Lemma \ref{mappingLem} is equivalent to $g_k(\t(\alpha_{l-j+1})(\nu_j(z_i)))=\t(\alpha_{l-j+1})(\nu_{j-1}(z_i))$.
\end{itemize}
\item if $\epsilon_{l-b_i+1}=+1$ and $\iota(l-b_i+1)=k$, then

\[
\alpha_{l-b_i+1}(g_k)(\nu_{b_i-1}(z_i))=\sum_{(i',j')\prec(i,b_i),o=1,\ldots,l}{a_{i,i',j',o}\nu_{j'}(z_{i'})},
\]

which by Lemma \ref{mappingLem} and the semilinearity of the $\t(\alpha_k)$ is equivalent to

\[
g_k(\t(\alpha_{l-b_i+1})(\nu_{b_i-1}(z_i)))=\sum_{(i',j')\prec(i,b_i),o=1,\ldots,l}{\sigma_{l-b_i+1}^{-1}(a_{i,i',j',o})\t(\alpha_{l-b_i+1})(\nu_{j'}(z_{i'}))}.
\]

\item if $\epsilon_{l-b_i+1}=-1$ and $\iota(l-b_i+1)=k$, then

\[
\alpha_{l-b_i+1}(g_k)(\sum_{(i',j')\prec(i,b_i),o=1,\ldots,l}{a_{i,i',j',o}\nu_{j'}(z_{i'})})=\nu_{b_i-1}(z_i),
\]

which is equivalent to

\[
g_k(\sum_{(i',j')\prec(i,b_i),o=1,\ldots,l}{\sigma_{l-b_i+1}^{-1}(a_{i,i',j',o})\t(\alpha_{l-b_i+1})}(\nu_{j'}(z_{i'})))=\t(\alpha_{l-b_i+1})(\nu_{b_i-1}(z_i)).
\]
\end{itemize}

Like in the proof of Theorem \ref{alternativeTheo}(1), we now argue that this system of conditions of the form $g_k(v)=w$ is either contradictory (i.e., not satisfiable for any choice of the $g_k$ in $\End_F(\Delta)$) or the conditions are independent, meaning here that for each $k$, the set of all vectors appearing as arguments in one of the conditions concerning $g_k$ is $E$-linearly independent.

Indeed, assume that for some $k$, the set of argument vectors for $g_k$ is $E$-linearly dependent. Note that the lexicographical order $\prec$ which we defined on $\{1,\ldots,L\}\times\{0,\ldots,l\}$ also induces a linear order on the conditions involving the variable $g_k$, as each such condition is by definition associated with a pair $(i,j)\in\{1,\ldots,L\}\times\{0,\ldots,l\}$ in an injective way (for a condition as described in the last two bullet points above, this pair is $(i,b_i)$). By means of this linear order, list the conditions involving $g_k$ as follows: $g_k(v_1)=w_1, g_k(v_2)=w_2,\ldots, g_k(v_{t_k})=w_{t_k}$. Since the set $\{v_1,\ldots,v_{t_k}\}$ is $E$-linearly dependent by assumption, there exists $u\in\{2,\ldots,t_k\}$ such that $v_u\in\Span\{v_1,\ldots,v_{u-1}\}$. Note that if the system of conditions is satisfiable through a suitable choice of $g_1,\ldots,g_d\in\End_F(\Delta)$, then this implies that likewise $w_u\in\Span\{w_1,\ldots,w_{u-1}\}$. We will now argue that this is not the case.

By choice of $\overline{z}$, the assumption that $v_u\in\Span\{v_1,\ldots,v_{u-1}\}$ implies that $g_k(v_u)=w_u$ must be a condition as described in the third bullet point above, with $v_u=\sum_{(i',j')\prec(i,b_i),o=1,\ldots,l}{\sigma_{l-b_i+1}^{-1}(a_{i,i',j',o})\t(\alpha_{l-b_i+1})}(\nu_{j'}(z_{i'}))$ and $w_u=\t(\alpha_{l-b_i+1})(\nu_{b_i-1}(z_i))$ (and thus $\epsilon_{l-b_i+1}=-1$). Using that no two consecutive terms in the sequence $x_l^{\epsilon_l},\ldots,x_1^{\epsilon_1}$ are mutually inverse in the corresponding free group, we get that none of the conditions $g_k(v_1)=w_1, \ldots, g_k(v_{u-1})=w_{u-1}$ is associated with the pair $(i,b_i-1)$, and the assertion that $w_u\notin\Span\{w_1,\ldots,w_{u-1}\}$ now follows again by choice of $\overline{z}$.

Hence we may assume w.l.o.g.~that the above described $b_1+\cdots+b_L$ conditions on the $g_k$ are independent, so that by Lemma \ref{stabilizerLem} and the convexity of the function $r\mapsto r^2+r$, we see that there are no more than

\[
q^{(b_1+\cdots+b_L)^2+(b_1+\cdots+b_L)-(b_1+\cdots+b_L)n}|G|^d
\]

elements of $X$, subject to the choices of $a_{i,i',j,k}$, $b_i$ and $\overline{z}$. Hence

\begin{equation}\label{xUpperBoundEq2}
|X|\leq l^Lq^{l^2L^2+2(b_1+\cdots+b_L)^2}|G|^d\leq l^Lq^{l^2L^2+2l^2L^2}|G|^d=l^Lq^{3l^2L^2}|G|^d.
\end{equation}

On the other hand, if $\vec{g}\in G^d$ lies in $\Phi$, then for all $\vec{z}\in\Delta^L$, $(\vec{g},\vec{z})$ is an element of $X$ if and only if for all $i=1,\ldots,L$, the condition

\begin{equation}\label{spanEq3}
z_i\notin\Span_{(i',j')\prec(i,0),k=1,\ldots,l}{\t(\alpha_k)(\nu_{j'}(z_{i'}))}
\end{equation}

is satisfied. Now for each $i$, the span on the RHS of Formula (\ref{spanEq3}) has $E$-dimension less than $l^2L\leq n/9\leq n-1$ and thus is a proper $E$-subspace of $\Delta$. It follows that in each step of iteratively fixing an $L$-tuple $(z_1,\ldots,z_L)\in\Delta^L$ according to Formula (\ref{spanEq3}), we have at least $q^{n-1}$ many choices for $z_i$. Hence the number of pairs $(\vec{g},\vec{z})\in X$ with $\vec{g}\in\Phi$ fixed is at least $q^{L(n-1)}$, and it follows that

\begin{equation}\label{xLowerBoundEq2}
|X|\geq |\Phi|\cdot q^{L(n-1)}.
\end{equation}

Combining Formulas (\ref{xUpperBoundEq2}) and (\ref{xLowerBoundEq2}), we get that

\begin{align*}
|\Phi| &\leq l^Lq^{3l^2L^2-L(n-1)}|G|^d \leq l^{n/(9l^2)}q^{3l^2\cdot n^2/(81l^4)-n/(9l^2)\cdot n/2}|G|^d \\
&\leq q^n\cdot q^{n^2(1/(27l^2)-1/(18l^2))}|G|^d = q^{n-n^2/(54l^2)}|G|^d = (q^{n^2})^{1/n-1/(54l^2)}|G|^d \\
&\leq (q^{n^2})^{1/(216l^2)-1/(54l^2)}|G|^d = (q^{n^2})^{-1/(72l^2)}|G|^d \leq |G|^{d-1/(72l^2)},
\end{align*}

where in the last step, we used that $|G|\leq q^{n^2}$, which in the symmetric and anti-symmetric cases is trivial since $G\leq\GL_n(q)$ then, and in the Hermitian case, it follows from $|G|=|\GU_n(q)|=q^{n(n-1)/2}(q^n-(-1)^n)(q^{n-1}-(-1)^{n-1})\cdots(q^2-1)(q+1)$, see, for example, \cite[p.~x]{CCNPW85a}.
\end{proof}

\subsection{Second part of the proof of Theorem \ref{alternativeTheo}: General linear groups}\label{subsec3P4}

As mentioned before, for the general linear groups $G=\GL_n(q)$, the argument used for the other isometry groups from Theorem \ref{alternativeTheo}(2) needs to be modified. This is because the automorphisms of $G$ which can be written as $\conj(U)\circ\aut(\sigma)$ for some $U\in G$ and $\sigma\in\Aut(\IF_q)$ only form an index $2$ subgroup, hitherto denoted by $\B(\GL_n(q))=\B(G)$, in $\A(G)$. A representative for the other coset of $\B(G)$ in $\A(G)$ is the inverse-transpose automorphism $\tau:U\mapsto (U^{-1})^t=(U^t)^{-1}$. This also means that it is not possible in general to rewrite a condition of the form $\alpha(g)(v)=w$ with $\alpha\in\A(G)$ equivalently into one of the form $g(\t(\alpha)(v))=\t(\alpha)(w)$ as before. However, it is easy to see that we can at least rewrite each such condition equivalently into one of two possible forms:

\begin{lemmma}\label{rewritingLem}
Let $G=\GL_n(q)$ for some $n\in\IN^+$ and prime power $q$, and let $\Delta:=\IF_q^n$, an $\IF_q$-vector space on which $G$ acts naturally. Furthermore, let $\alpha\in\A(G)$, $g\in G$ and $x,y\in\Delta$. Then the following hold:

\begin{enumerate}
\item If $\alpha\in\B(G)$, say $\alpha=\conj(U)\circ\sigma$, then setting $\t(\alpha):=(U\circ\perm(\sigma))^{-1}\in\Sym_{\Delta}$ just as in Notation \ref{tNot}(2), we have that $\alpha(g)x=y$ is equivalent to $g\t(\alpha)(x)=\t(\alpha)(y)$.
\item If $\alpha\in\A(G)\setminus\B(G)$, say $\alpha=\beta\circ\tau$ with $\beta=\conj(U)\circ\sigma$, then $\alpha(g)x=y$ is equivalent to $g^t\t(\beta)(y)=\t(\beta)(x)$.
\end{enumerate}
\end{lemmma}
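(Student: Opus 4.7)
The plan is to handle the two cases separately. Part (1) is a direct special case of Lemma \ref{mappingLem}: since $\alpha = \conj(U)\circ\aut(\sigma)\in\B(G)\subseteq\A(G)$ and the definition of $\t(\alpha)$ given here agrees verbatim with Notation \ref{tNot}(2), the automorphism $\alpha$ is still the restriction to $G$ of conjugation by $\t(\alpha)^{-1} = U\circ\perm(\sigma)$ inside $\Sym_{\Delta}$, and the equivalence $\alpha(g)x=y \iff g\,\t(\alpha)(x)=\t(\alpha)(y)$ follows by exactly the same computation.

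For part (2), I would unfold the definition of $\tau$. Since $\alpha=\beta\circ\tau$, one has $\alpha(g)=\beta((g^t)^{-1})=U\cdot\aut(\sigma)\bigl((g^t)^{-1}\bigr)\cdot U^{-1}$. Writing $\sigma(M)$ for the entry-wise application of $\sigma$ to a matrix $M$, and using that the ring automorphism induced by $\sigma$ on $\Mat_n(\IF_q)$ commutes with transposition and inversion, one gets $\alpha(g) = U\bigl(\sigma(g)^t\bigr)^{-1}U^{-1}$. Hence the equation $\alpha(g)x=y$ is equivalent, after left-multiplying by $U^{-1}$ on both sides and then inverting $(\sigma(g)^t)^{-1}$, to the identity
\[
\sigma(g)^t\,U^{-1}y \;=\; U^{-1}x.
\]

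The key step is then to apply $\perm(\sigma^{-1})=\perm(\sigma)^{-1}$ to both sides and invoke the (straightforward) semilinearity identity $\perm(\sigma^{-1})(Mv)=\sigma^{-1}(M)\cdot\perm(\sigma^{-1})(v)$ valid for all $M\in\Mat_n(\IF_q)$ and $v\in\Delta$. Taking $M=\sigma(g)^t$ and noting that $\sigma^{-1}(\sigma(g)^t)=g^t$, this turns the displayed equation into
\[
g^t\cdot\perm(\sigma^{-1})(U^{-1}y) \;=\; \perm(\sigma^{-1})(U^{-1}x),
\]
and since $\t(\beta)=(U\circ\perm(\sigma))^{-1}=\perm(\sigma^{-1})\circ U^{-1}$, this is precisely the claimed equation $g^t\,\t(\beta)(y)=\t(\beta)(x)$. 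All manipulations are reversible, giving the converse.

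There is no real conceptual obstacle; the proof is essentially a direct computation. The only point requiring care is the bookkeeping around how $\perm(\sigma)$ interacts with left multiplication by a matrix: because $\perm(\sigma)$ is $\sigma$-semilinear rather than linear, the inverse $\sigma^{-1}$ must appear on the matrix $\sigma(g)^t$ when we push $\perm(\sigma^{-1})$ inside, and this is exactly what produces the plain $g^t$ on the right-hand side.
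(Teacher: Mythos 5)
Your proof is correct and follows essentially the same route as the paper: part (1) is exactly the observation underlying Lemma \ref{mappingLem}, and for part (2) the paper likewise uses $\alpha(g)x=y\Leftrightarrow\beta\bigl((g^t)^{-1}\bigr)x=y\Leftrightarrow\beta(g^t)y=x$ and then passes to $g^t\,\t(\beta)(y)=\t(\beta)(x)$. The only difference is cosmetic: where the paper obtains the last equivalence by invoking the conjugation-by-$\t(\beta)^{-1}$ description (i.e.\ part (1) applied to $\beta\in\B(G)$), you re-derive it by the explicit semilinearity computation $\perm(\sigma^{-1})(Mv)=\sigma^{-1}(M)\cdot\perm(\sigma^{-1})(v)$.
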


\begin{proof}
The argument for point (1) is like the one for Lemma \ref{mappingLem}: that $\alpha$ can be viewed as the restriction of the inner automorphism $\conj(\t(\alpha)^{-1}):\Sym_{\Delta}\rightarrow\Sym_{\Delta}$ to $G\leq\Sym_{\Delta}$.

As for point (2), note that

\[
\alpha(g)x=y \Leftrightarrow \beta((g^t)^{-1})x=y \Leftrightarrow \beta(g^t)^{-1}x=y \Leftrightarrow \beta(g^t)y=x \Leftrightarrow g^t\t(\beta)(y)=\t(\beta)(x),
\]

as required.
\end{proof}

In view of this, the following Lemma will act as a substitute for Lemma \ref{stabilizerLem}:

\begin{lemmma}\label{substituteLem}
Let $n\in\IN^+$, $q$ a prime power, $r_1,r_2\in\IN$ with $r_1,r_2\leq n$. Let $v_1^{(1)},\ldots,v_{r_1}^{(1)},w_1^{(1)},\ldots,w_{r_1}^{(1)},v_1^{(2)},\ldots,v_{r_2}^{(2)},w_1^{(2)},\ldots,w_{r_2}^{(2)}\in\IF_q^n$ such that $v_1^{(1)},\ldots,v_{r_1}^{(1)}$ are $\IF_q$-linearly independent and $v_1^{(2)},\ldots,v_{r_2}^{(2)}$ are $\IF_q$-linearly independent. Then the number of $g\in\Mat_n(q)$ such that $gv_i^{(1)}=w_i^{(1)}$ for $i=1,\ldots,r_1$ and $g^tv_j^{(2)}=w_j^{(2)}$ for $j=1,\ldots,r_2$ is at most $q^{n^2-(r_1+r_2)n+r_1r_2}$.
\end{lemmma}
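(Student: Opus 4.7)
The plan is to notice that the exponent factors as $n^2 - (r_1+r_2)n + r_1 r_2 = (n-r_1)(n-r_2)$, which strongly suggests that, after a suitable change of basis, the admissible $g$'s form (if nonempty) an affine subspace of $\Mat_n(q)$ in bijection with $\IF_q^{(n-r_2)(n-r_1)}$. I would therefore reduce the problem, by a linear change of coordinates on the domain and codomain of $g$, to a simple combinatorial counting problem about matrices with some prescribed rows and columns.

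Concretely, I would first complete $v_1^{(1)},\ldots,v_{r_1}^{(1)}$ to a basis of $\IF_q^n$ and arrange this basis as the columns of an invertible matrix $P\in\GL_n(q)$, so that $Pe_i=v_i^{(1)}$ for $i=1,\ldots,n$. Analogously, I would extend $v_1^{(2)},\ldots,v_{r_2}^{(2)}$ to a basis and form an invertible matrix $Q\in\GL_n(q)$ with $Qe_j=v_j^{(2)}$ for $j=1,\ldots,n$. The map $g\mapsto h:=Q^t g P$ is then a bijection on $\Mat_n(q)$, so it suffices to bound the number of $h$ satisfying the translated conditions.

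A direct computation shows that the condition $gv_i^{(1)}=w_i^{(1)}$ is equivalent to the $i$-th column of $h$ being equal to $Q^t w_i^{(1)}$ (using $gv_i^{(1)}=gPe_i$ and $h=Q^t(gP)$), so that the first family of conditions prescribes exactly the first $r_1$ columns of $h$. Dually, using that $g^t v_j^{(2)}=w_j^{(2)}$ is equivalent to $(v_j^{(2)})^t g=(w_j^{(2)})^t$, which says that the $j$-th row of $Q^t g$ equals $(w_j^{(2)})^t$, one finds that the second family of conditions prescribes the first $r_2$ rows of $h$ (explicitly as $(w_j^{(2)})^t P$).

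From here the counting is immediate: the first $r_1$ columns and the first $r_2$ rows of $h$ are simultaneously prescribed, and these prescriptions either agree on their overlapping top-left $r_2\times r_1$ block or they do not. In the former case, $h$ is uniquely determined on the union of those rows and columns and is completely free on the complementary bottom-right $(n-r_2)\times(n-r_1)$ block, giving exactly $q^{(n-r_1)(n-r_2)}$ admissible matrices; in the latter case there are none. Either way, at most $q^{(n-r_1)(n-r_2)}=q^{n^2-(r_1+r_2)n+r_1 r_2}$ admissible $g$'s exist, as required. I do not anticipate a genuine obstacle; the only book-keeping point requiring care is keeping the transpose straight in the second family of conditions, so that the bijection $g\mapsto Q^t g P$ really does convert the column-type and row-type constraints into disjoint prescriptions on the columns and rows of $h$.
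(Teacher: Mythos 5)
Your proof is correct, and while it follows the same overall strategy as the paper — normalize by an invertible change of coordinates, then count free matrix entries — your normalization is genuinely different and streamlines the argument. The paper conjugates by a single matrix $T$ with $Tv_i^{(1)}=e_i$, which turns the first family of conditions into prescriptions of the first $r_1$ columns of $h=TgT^{-1}$ but leaves the second family in the form $h^tx_j^{(2)}=y_j^{(2)}$ with the $x_j^{(2)}$ only known to be linearly independent; it therefore needs an extra Gaussian-elimination step (choosing pivot indices $t_1<\cdots<t_{r_2}$ and passing to combinations $z_j$) to conclude that $r_2$ designated rows of $h$ are determined by the remaining ones, after which counting the unconstrained coefficients gives the exponent $n^2-(r_1+r_2)n+r_1r_2$. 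Your two-sided transformation $g\mapsto Q^t g P$, with $P$ and $Q$ adapted separately to the two linearly independent sets, normalizes both families simultaneously into prescriptions of the first $r_1$ columns and the first $r_2$ rows of $h$, so the pivot argument disappears and the bound $q^{(n-r_1)(n-r_2)}=q^{n^2-(r_1+r_2)n+r_1r_2}$ is immediate (with equality exactly when the two prescriptions agree on the top-left $r_2\times r_1$ block). This is legitimate precisely because the lemma counts over all of $\Mat_n(q)$, so the normalization need not be a conjugation; the only point you use implicitly, and which is indeed valid since $P,Q\in\GL_n(q)$, is that $g\mapsto Q^t g P$ is a bijection of $\Mat_n(q)$ onto itself, so the count transfers back to the original matrices $g$.
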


\begin{proof}
Fix $T\in\GL_n(q)$ such that $v_i^{(1)}=T^{-1}e_i$ for $i=1,\ldots,r_1$, where $e_i$ denotes the $i$-th \enquote{standard basis vector} of $\IF_q^n$ (which has $i$-th entry $1$ and all other entries $0$). Then for $i=1,\ldots,r_1$, the condition $gv_i^{(1)}=w_i^{(1)}$ is equivalent to

\begin{equation}\label{hEq1}
he_i=y_i^{(1)},
\end{equation}

where $h:=TgT^{-1}$ and $y_i^{(1)}:=Tw_i^{(1)}$. Furthermore, for $j=1,\ldots,r_2$, the condition $g^tv_j^{(2)}=w_j^{(2)}$ is equivalent to

\begin{equation}\label{hEq2}
h^tx_j^{(2)}=y_j^{(2)},
\end{equation}

where $x_j^{(2)}:=T^tv_j^{(2)}$ and $y_j^{(2)}:=(T^{-1})^tw_j^{(2)}$. Instead of counting the number of $g\in\GL_n(q)$ satisfying the $r_1+r_2$ many mapping conditions from the assumptions, we count the number of $h\in\GL_n(q)$ satisfying all the equivalently rewritten conditions from Formulas (\ref{hEq1}) and (\ref{hEq2}).

To this end, note that each of the conditions $he_i=y_i^{(1)}$, $i=1,\ldots,r_1$, completely determines one of the first $r_1$ many columns of the matrix $h$.

Note further that, since the $x_j^{(2)}=T^tv_j^{(2)}$, $j=1,\ldots,r_2$, are $\IF_q$-linearly independent, there exist indices $1\leq t_1<t_2<\cdots<t_{r_2}\leq n$ such that for $j=1,\ldots,r_2$, a suitable $\IF_q$-linear combination of $x_1^{(2)},\ldots,x_{r_2}^{(2)}$ is a vector $z_j$ whose $i_j$-th coordinate is $1$ and whose $i_k$-th coordinate for $k\in\{1,\ldots,r_2\}\setminus\{j\}$ is $0$. Hence the conditions $h^tx_j^{(2)}=y_j^{(2)}$, $j=1,\ldots,r_2$, together imply conditions of the form

\begin{equation}\label{hEq3}
h^tz_j=u_j,
\end{equation}

where $u_j$ is a suitable linear combination of $y_1^{(2)},\ldots,y_{r_2}^{(2)}$. However, by the conditions from Formula (\ref{hEq3}), the rows number $t_1,\ldots,t_{r_2}$ of $h$ can be expressed as $\IF_q$-linear combinations of the rows of $h$ whose number is not from the set $\{t_1,\ldots,t_{r_2}\}$.

Combining the two statements about how the conditions affect coefficients from $h$, we see that $h$ is completely determined by the conditions from Formulas (\ref{hEq1}) and (\ref{hEq2}) if we additionally fix the coefficients of $h$ that lie neither in one of the first $r_1$ many columns nor in one of the rows number $t_1,\ldots,t_{r_2}$ of $h$. As there are precisely $n^2-(r_1+r_2)n+r_1r_2$ such coeffcients of $h$, there are at most $q^{n^2-(r_1+r_2)n+r_1r_2}$ many $h\in\GL_n(q)$ that satisfy the conditions from Formulas (\ref{hEq1}) and (\ref{hEq2}), as required.
\end{proof}

\begin{proof}[Proof of Theorem \ref{alternativeTheo}(2) for general linear groups]
Let $G=\GL_n(q)$, $n\geq 72(d+1)^2l^2$, and fix automorphisms $\alpha_1,\ldots,\alpha_l\in\A(G)$. We want to show that the size of the fiber $\Phi$ of $1_G=\id$ under $w_G^{(\alpha_1,\ldots,\alpha_l)}$ is at most $|G|^{d-1/(36(d+1)l^2)}$. As the argument is a modification of the one for the other isometry groups given at the end of the last subsection, we will only indicate at which points the argument needs to be altered here:

\begin{itemize}
\item Instead of $L:=\lfloor n/(9l^2)\rfloor$, we set $L:=\lfloor n/(3(d+1)l^2)\rfloor$ here.
\item As we said at the beginning of this subsection, we cannot write $\alpha_k=\conj(U_k)\circ\aut(\sigma_k)$ anymore in general, but we can write $\alpha_k=\conj(U_k)\circ\aut(\sigma_k)\circ\tau^{a_k}$, where $a_k\in\{0,1\}$.
\item Accordingly, we use Lemma \ref{rewritingLem} for the equivalent reformulation of the mapping conditions on the $g_k$. In those cases where the automorphism $\alpha_k$ occurring in the condition involves $\tau$ (i.e., $a_k=1$), the \enquote{mapping direction} in the equivalent reformulation of the condition is turned around. Hence even with our careful choice of $\overline{z}$, we cannot guarantee anymore that for each $k=1,\ldots,d$, the argument vectors in the various reformulated conditions involving either $g_k$ or $g_k^t$ are linearly independent. However, this is not even necessary, since Lemma \ref{substituteLem}, which we want to apply in order to get an upper bound on the number of possibilities for $\vec{g}$, only requires that each of the two sets of argument vectors in conditions involving $g_k$ and $g_k^t$ separately be linearly independent, which is still the case as long as the system of $b_1+\cdots+b_L$ conditions is not contradictory, by an analogous argument.
\item Hence if we denote, for $k=1,\ldots,d$, the number of rewritten conditions involving $g_k$ by $r_1^{(k)}$ and the number of those conditions involving $g_k^t$ by $r_2^{(k)}$, then an application of Lemma \ref{substituteLem} yields that the number of elements of $X$, subject to the choices of $a_{i,i',j,k}$, $b_i$ and $\overline{z}$, is at most

\[
q^{dn^2-(b_1+\cdots+b_L)n+r_1^{(1)}r_2^{(1)}+\cdots+r_1^{(d)}r_2^{(d)}}\leq q^{dn^2-(b_1+\cdots+b_L)n+dl^2L^2}.
\]

Hence we get the following upper bound on $|X|$ here:

\[
|X|\leq l^Lq^{l^2L^2+dn^2+dl^2L^2}=l^Lq^{(d+1)l^2L^2}(q^{n^2})^d.
\]

Now $|G|=|\GL_n(q)|=(q^n-1)(q^n-q)\cdots(q^n-q^{n-1})\geq (q^{n-1})^n=q^{n(n-1)}$, and so $q^{n^2}\leq |G|^{n/(n-1)}=|G|^{1+1/(n-1)}\leq q^{n^2/(n-1)}\cdot|G|$. Therefore,

\begin{equation}\label{xUpperBoundEq3}
|X|\leq l^Lq^{(d+1)l^2L^2+dn^2/(n-1)}|G|^d.
\end{equation}

The lower bound on $|X|$ is still the same as in Formula (\ref{xLowerBoundEq2}).

\item Note that since we are assuming that $n\geq 72(d+1)^2l^2$, we have

\begin{equation}\label{auxiliaryEq}
\frac{2d+1}{n}-\frac{1}{18(d+1)l^2} \leq -\frac{1}{36(d+1)l^2}.
\end{equation}

Indeed, Formula (\ref{auxiliaryEq}) is equivalent to $n\geq 36(2d+1)(d+1)l^2$, and $36(2d+1)(d+1)l^2\leq 36(2d+2)(d+1)l^2=72(d+1)^2l^2$.

Hence by combining the upper and lower bound on $|X|$, we get the following:

\begin{align*}
|\Phi| &\leq l^Lq^{(d+1)l^2L^2-L(n-1)+dn^2/(n-1)}|G|^d \\
&\leq l^{n/(3(d+1)l^2)}q^{(d+1)l^2n^2/(9(d+1)^2l^4)-n/(3(d+1)l^2)\cdot n/2+2nd}|G|^d \\
&\leq q^n\cdot (q^{n^2})^{(d+1)l^2/(9(d+1)^2l^4)-1/(6(d+1)l^2)+2d/n}|G|^d \\
&= (q^{n^2})^{1/(9(d+1)l^2)-1/(6(d+1)l^2)+(2d+1)/n}|G|^d \\
&= (q^{n^2})^{(2d+1)/n-1/(18(d+1)l^2)}|G|^d \\
&\leq (q^{n^2})^{-1/(36(d+1)l^2)}|G|^d \leq |G|^{d-1/(36(d+1)l^2)},
\end{align*}

where the second-to-last $\leq$ (i.e., the first $\leq$ in the last row) is by Formula (\ref{auxiliaryEq}).
\end{itemize}
\end{proof}

\section{Proof of Theorem \ref{mainTheo}}\label{sec4}

For proving Theorem \ref{mainTheo}, we are supposed to exclude certain nonabelian finite simple groups as composition factors of a finite group $G$ satisfying the condition $\p_w(G)\geq\rho$ for some fixed nonempty reduced word $w$ and $\rho\in\left(0,1\right]$.

Assume that $S$ is a nonabelian composition factor of $G$. By Lemma \ref{mainLem}(2), we know that $\p_w(G)\leq\p_w(N)\cdot\p_w(G/N)$ whenever $N$ is characteristic in $G$. It follows that $\rho\leq\p_w(G)\leq\prod_{i=1}^r{\p_w(F_i)}\leq\min_{i=1,\ldots,r}{\p_w(F_i)}$, where $F_1,\ldots,F_r$ are the \emph{characteristic} composition factors of $G$, i.e., the factors in any principal characteristic series of $G$ (see \cite[p.~65]{Rob96a}), counted with multiplicities. As each $F_i$ is characteristically simple and thus of the form $S_i^{n_i}$ for some finite simple group $S_i$ and $n_i\in\IN^+$ by \cite[3.3.15, p.~87]{Rob96a}, there must exist $i\in\{1,\ldots,r\}$ such that $S_i=S$. Hence we can derive from the assumption that $S$ is a composition factor of $G$ that $\p_w(S^n)\geq\rho$ for some $n\in\IN^+$.

Our next goal on the way to the proof of Theorem \ref{mainTheo} thus is to study $\p_w(T)$, where $T=S^n$ is a finite nonabelian characteristically simple group. In Lemma \ref{variationLem} below, we will show that $\p_w(S^n)\leq\max_{w'}{\p_{w'}(S)}$, where $w'$ runs through a finite set of words associated with $w$, the so-called \enquote{variations of $w$}:

\begin{definition}\label{variationDef}
Let $w=x_1^{\epsilon_1}\cdots x_l^{\epsilon_l}=X_{\iota(1)}^{\epsilon_1}\cdots X_{\iota(l)}^{\epsilon_l}$ be a reduced word of length $l\in\IN$ in the variables $X_1,\ldots,X_d$. For $k=1,\ldots,d$, denote by $a_k$ the number of occurrences of $X_k^{\pm1}$ in $w$ (so that $a_1+\cdots+a_d=l$). A \emph{variation of $w$} is a word of the form $X_{\iota(1),t_1}^{\epsilon_1}\cdots X_{\iota(l),t_l}^{\epsilon_l}$, where $t_i\in\{1,\ldots,a_{\iota(i)}\}$ for $i=1,\ldots,l$.
\end{definition}

Hence a variation of $w$ is a word $w'$ of the same length as $w$ and in variables of the form $X_{k,t}$ with $k\in\{1,\ldots,d\}$ and $t\in\{1,\ldots,a_k\}$ that is obtained from $w$ by adding second indices to each occurrence of $X_k^{\pm1}$, $k=1,\ldots,d$, in $w$ such that each second index is from the \enquote{admissible range}, i.e., lies somewhere between $1$ and the number $a_k$ of occurrences of $X_k^{\pm1}$ in $w$.

\begin{example}\label{variationEx}
Consider the commutator word $w=[X_1,X_2]=X_1X_2X_1^{-1}X_2^{-1}$. The following is a variation of $w$: $X_{1,2}X_{2,1}X_{1,1}^{-1}X_{2,1}^{-1}$. The word $X_{1,3}X_{2,1}X_{1,2}^{-1}X_{2,2}^{-1}$, however, is \emph{not} a variation of $w$, since the second index $3$ added to the first variable $X_1$ does not lie within the admissible range $\{1,2\}$.
\end{example}

\begin{remark}\label{variationRem}
Some simple observations concerning variations:

\begin{enumerate}
\item Each variation of a reduced word of length $l$ is again a reduced word of length $l$.
\item Each reduced word $w$ only has finitely many variations. More precisely, if $w$ is a reduced word in the variables $X_1,\ldots,X_d$, and $X_i^{\pm1}$ occurs precisely $a_i$ times in $w$ for $i=1,\ldots,d$, then the number of variations of $w$ is precisely $\prod_{k=1}^d{a_k^{a_k}}$.
\item Each reduced word $w$ can be obtained from each of its variations $w'$ by substituting $X_k$ for $X_{k,t}$, $t=1,\ldots,a_k$, in $w'$. Hence for each finite group $G$ and each variation $w'$ of $w$, $\pi_{w'}(G)=1$ implies $\pi_w(G)=1$, and $\p_{w'}(G)=1$ implies $\p_w(G)=1$.
\end{enumerate}
\end{remark}

\begin{lemma}\label{variationLem}
Let $w$ be a reduced word of length $l\geq 1$ in the variables $X_1,\ldots,X_d$, $S$ a nonabelian finite simple group and $n\in\IN^+$. Set $\epsilon=\epsilon(S,w):=\max_{w'}{\p_{w'}(S)}\in\left(0,1\right]$, where $w'$ runs through the variations of $w$. Then $\p_w(S^n)\leq\epsilon^{\lceil n/l^2\rceil}\leq\epsilon$.
\end{lemma}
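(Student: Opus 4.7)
The strategy is to exploit the wreath-product structure $\Aut(S^n) \cong \Aut(S) \wr \Sym_n$ (valid because $S$ is nonabelian simple) to reduce the problem of counting the fiber on $S^n$ to counting fibers of automorphic word maps on $S$, each of which will turn out to correspond to a variation of $w$.

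By Lemma~\ref{mainLem}(2) it suffices to bound the fiber of $1_{S^n}$ under an arbitrary automorphic word map $w_{S^n}^{(\alpha_1, \ldots, \alpha_l)}$. Every such $\alpha_i$ acts coordinate-wise as $(x_1, \ldots, x_n) \mapsto (\beta_{i,1}(x_{\pi_i^{-1}(1)}), \ldots, \beta_{i,n}(x_{\pi_i^{-1}(n)}))$ for some $\pi_i \in \Sym_n$ and $\beta_{i,j} \in \Aut(S)$. Writing $w = X_{\iota(1)}^{\epsilon_1} \cdots X_{\iota(l)}^{\epsilon_l}$ and each $g_k \in S^n$ as $g_k = (g_{k,1}, \ldots, g_{k,n})$, the single equation on $S^n$ decomposes into $n$ scalar equations on $S$:
\[
E_j:\quad \prod_{i=1}^{l} \beta_{i,j}\bigl(g_{\iota(i),\, \pi_i^{-1}(j)}\bigr)^{\epsilon_i} = 1_S, \qquad j = 1, \ldots, n.
\]
For fixed $j$, the repetition pattern of the indices $\pi_i^{-1}(j)$ within each block $\{i : \iota(i) = k\}$ determines, in the sense of Definition~\ref{variationDef}, a variation $w_j'$ of $w$; the equation $E_j$ is then the automorphic word map equation $(w_j')_S^{(\beta_{1,j}, \ldots, \beta_{l,j})} = 1_S$ in the distinct scalar variables $g_{\iota(i), \pi_i^{-1}(j)}$. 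Hence the density of solutions to $E_j$ in the ambient product of copies of $S$ is at most $\p_{w_j'}(S) \leq \epsilon$.

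The decisive step is an independent-set argument. For fixed $(k, m)$, the scalar variable $g_{k,m}$ enters $E_j$ iff $j = \pi_i(m)$ for some $i$ with $\iota(i) = k$, so it appears in at most $a_k \leq l$ of the equations. Combined with the fact that each $E_j$ involves at most $l$ distinct scalar variables, the graph $\Gamma$ on $\{1, \ldots, n\}$ in which $j \sim j'$ iff $E_j$ and $E_{j'}$ share a scalar variable has maximum degree at most $l(l-1) \leq l^2 - 1$. A greedy argument therefore yields an independent set $J \subseteq \{1, \ldots, n\}$ of size at least $\lceil n/l^2 \rceil$.

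For $j \in J$ the equations $E_j$ involve pairwise disjoint subsets of the $dn$ scalar variables making up $(S^n)^d$, and all other scalar variables are unconstrained by the subsystem $\{E_j : j \in J\}$. Hence the number of $(g_1, \ldots, g_d) \in (S^n)^d$ satisfying that subsystem is at most $|S|^{dn} \prod_{j \in J} \p_{w_j'}(S) \leq \epsilon^{|J|} |S|^{dn}$, and since the full fiber is contained in this solution set, $\p_w(S^n) \leq \epsilon^{\lceil n/l^2 \rceil} \leq \epsilon$, using $\epsilon \leq 1$ and $\lceil n/l^2 \rceil \geq 1$. I expect the main technical obstacle to be the careful bookkeeping needed to verify that $E_j$ is \emph{precisely} an automorphic word map equation for the variation $w_j'$ (rather than for some more exotic word with additional identifications or twists), but this amounts to unpacking the indexing $i \mapsto \pi_i^{-1}(j)$ alongside Definition~\ref{variationDef}.
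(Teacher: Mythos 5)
Your proposal is correct and follows essentially the same route as the paper's proof: decompose via $\Aut(S^n)\cong\Aut(S)\wr\Sym_n$ into $n$ coordinate equations, identify each (after relabelling variables) with an automorphic word map equation for a variation of $w$, and then extract $\lceil n/l^2\rceil$ coordinate equations with pairwise disjoint variable sets to multiply the density bounds. The only difference is cosmetic: you phrase the selection of independent equations as a bounded-degree conflict graph ($\deg\leq l(l-1)$) with a greedy independent set, whereas the paper chooses the indices iteratively by excluding a forbidden set of size at most $tl^2$ at each step; both yield the same bound.
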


\begin{proof}
Fix automorphisms $\vec{\alpha_1},\ldots,\vec{\alpha_l}$ of $S^n$ and an element $\vec{g}=(g_1,\ldots,g_n)$ of $S^n$. By \cite[3.3.20, p.~90]{Rob96a}, we know that $\Aut(S^n)=\Aut(S)\wr\Sym_n$, and so for $i=1,\ldots,l$, we can write $\vec{\alpha_i}=(\alpha_{i,1}\times\cdots\times\alpha_{i,n})\circ\sigma_i$, where each $\alpha_{i,j}$ is an automorphism $S$ and $\sigma_i$ is a coordinate permutation on $S^n$.

Let $\vec{s_1}=(s_{1,1},\ldots,s_{1,n}),\ldots,\vec{s_d}=(s_{d,1},\ldots,s_{d,n})$, where each $s_{k,j}$ is a variable ranging over $S$, so that each $\vec{s_k}$ can be viewed as a variable element of $S^n$. We want to bound the number of solutions in $(S^n)^d\cong S^{nd}$ of the equation

\begin{equation}\label{vectorEq}
w_{S^n}^{(\vec{\alpha_1},\ldots,\vec{\alpha_l})}(\vec{s_1},\ldots,\vec{s_d})=\vec{g}.
\end{equation}

As usual, let us write $w=x_1^{\epsilon_1}\cdots x_l^{\epsilon_l}=X_{\iota(1)}^{\epsilon_1}\cdots X_{\iota(l)}^{\epsilon_l}$. By computing the LHS in Formula (\ref{vectorEq}) and comparing the entries of the vectors on both sides of the resulting equation, we see that the equation in Formula (\ref{vectorEq}) is equivalent to the conjunction of the following $n$ \enquote{coordinate equations}, for $i=1,\ldots,n$:

\begin{equation}\label{componentEq}
\alpha_{1,i}(s_{\iota(1),\sigma_1^{-1}(i)})^{\epsilon_1}\cdots\alpha_{l,i}(s_{\iota(l),\sigma_l^{-1}(i)})^{\epsilon_l}=g_i.
\end{equation}

The LHS of each of these equations is, up to a suitable renaming of the variables, the evaluation of an automorphic word map associated with a variation of $w$ in variables ranging over $S$. In particular, if $J_i$ denotes the set of those variables $s_{k,j}$ that are mentioned in the $i$-th coordinate equation, then that same equation implies that if we project the solution set $\Phi$ to the equation in Formula (\ref{vectorEq}) onto those coordinates that correspond to variables from $J_i$, the resulting image has size at most $\epsilon|S|^{|J_i|}$.

Our goal is to find $\lceil n/l^2\rceil$ pairwise distinct indices $i_1,\ldots,i_{\lfloor n/l^2\rfloor}\in\{1,\ldots,n\}$ such that the associated coordinate equations are pairwise independent, i.e., such that $J_{i_t}\cap J_{i_u}=\emptyset$ for $t\not=u$. Once we have found these indices, we are done, since it then follows that the projection of $\Phi$ onto those coordinates that correspond to variables from $\bigcup_{t=1}^{\lfloor n/l^2\rfloor}{J_{i_t}}$ has size at most

\[
\prod_{t=1}^{\lceil n/l^2\rceil}{\epsilon|S|^{|J_{i_t}|}}=\epsilon^{\lceil n/l^2\rceil}|S|^{|J_{i_1}|+\cdots+|J_{i_{\lceil n/l^2\rceil}}|},
\]

and thus $\Phi$ itself has size at most

\[
\epsilon^{\lceil n/l^2\rceil}|S|^{|J_{i_1}|+\cdots+|J_{i_{\lceil n/l^2\rceil}}|}\cdot |S|^{n-(|J_{i_1}|+\cdots+|J_{i_{\lceil n/l^2\rceil}}|)}=\epsilon^{\lceil n/l^2\rceil}|S|^n,
\]

as required.

We choose the indices $i_1,\ldots,i_{\lceil n/l^2\rceil}$ iteratively. $i_1$ can be chosen arbitrarily from $\{1,\ldots,n\}$. Denote by $M_1\subseteq\{1,\ldots,n\}$ the set of second indices $j$ in variables $s_{k,j}$ that are mentioned in the $i_1$-th coordinate equation, and note that $|M_1|\leq l$. Note that for any $i_2\in\{1,\ldots,n\}$, independence of the $i_1$-th and $i_2$-th coordinate equation is guaranteed if the sets of second indices that occur in the two equations are disjoint. Now if $M_2$ denotes the set of second indices occurring in the $i_2$-th equation, for $M_2$ to be disjoint with $M_1$, we need that $i_2$ does not lie in the set $\bigcup_{i=1}^l{\sigma_i[M_1]}$, which has size at most $l^2$. Hence as long as $n>l^2$, i.e., $\lceil n/l^2\rceil\geq 2$, we can choose such a second index $i_2$. More generally, if we have already found indices $i_1,\ldots,i_t$ such that the associated coordinate equations are pairwise independent and we want to find another index $i_{t+1}$, it is sufficient to choose $i_{t+1}$ outside of the set $\bigcup_{i=1}^l{\sigma_i[\bigcup_{u=1}^t{M_u}]}$, where $M_u$ denotes the set of second indices occurring in the $i_u$-th equation. This set of \enquote{forbidden} values for $i_{t+1}$ has size at most $tl^2$, and so as long as $n>tl^2$, i.e., $\lceil n/l^2\rceil\geq t+1$, we can choose $i_{t+1}$ as desired. This concludes the proof.
\end{proof}

The proof of Theorem \ref{mainTheo} is now easy:

\begin{proof}[Proof of Theorem \ref{mainTheo}]
For (1): If $S=\Alt_m$ is a composition factor of $G$, then by the observations from the beginning of this subsection, it follows that $\p_w(\Alt_m^n)\geq\rho$ for some $n\in\IN^+$, and thus $\p_{w'}(\Alt_m)\geq\rho$ for some variation $w'$ of $w$. However, $w'$ is a reduced word of length $l$ in at most $l$ distinct variables, and so if $|S|=|\Alt_m|>\max\{\lceil 256l^{16}e^{16M'l-2}\rceil!,\rho^{-16M'}\}$, we get a contradiction, since this implies by Theorem \ref{simpleGroupsTheo}(1) that

\[
\rho\leq\p_{w'}(\Alt_m)\leq|\Alt_m|^{-1/(16M')}<(\rho^{-16M'})^{-1/(16M')}=\rho.
\]

For (2): Assume that $S=X_r(q)$ is a (classical) simple group of Lie type with $r>\max\{72(l+1)^2l^2,\sqrt{72(l+1)l^2\log_2(\rho^{-1})}\}$ and that $S$ is a composition factor of $G$. As before, it follows that $\p_{w'}(S)\geq\rho$ for some variation $w'$ of $w$. In view of our choice of $r$, and using again that $w'$ is a reduced word of length $l$ in at most $l$ distinct variables and that $|X_r(q)|\geq q^{r^2}\geq 2^{r^2}$ (which follows from the known formulas for $|X_r(q)|$, for example from \cite[Table 6, p.~xvi]{CCNPW85a}), we get by Theorem \ref{simpleGroupsTheo}(2) that

\[
\rho\leq\p_{w'}(X_r(q))\leq|X_r(q)|^{-1/(72(l+1)l^2)}\leq 2^{-r^2/(72(l+1)l^2)}<\rho,
\]

a contradiction.
\end{proof}

\begin{proof}[Proof of Theorem \ref{wordMapTheo}]
This follows immediately from Theorem \ref{mainTheo}, as $\Pi_w(G)\leq\P_w(G)$.
\end{proof}

\section{Concluding remarks}\label{sec5}

As mentioned at the beginning of Section \ref{sec3}, the generalization of the third case in Larsen and Shalev's proof (the simple Lie type groups of bounded rank) from the word map setting to automorphic word maps is open. Described very briefly, Larsen and Shalev's approach to the third case is an algebro-geometric one and consists in studying the fibers of word maps in simple Lie type groups as subvarieties of the Lie type groups viewed as linear algebraic groups. One of the problems with extending this approach to automorphic word maps is that because of the existence of field automorphisms on Lie type groups, the degrees of the polynomial equations defining the fiber as a variety are, in contrast to the word map setting, in general not bounded by a constant any more.

Still, hoping that this and other difficulties can be overcome with sufficiently refined ideas, we will spend the rest of this concluding section discussing possible consequences of a successful adaptation of the proof.

The following is a direct generalization of \cite[Theorem 1.1]{LS12a} to automorphic word maps and would most likely result from a suitable adaptation of their proof in its entirety:

\begin{conjecture}\label{larShaConj}
For each nonempty and reduced word $w$ in $d$ distinct variables, there exist constants $N(w),\eta(w)>0$ such that for all nonabelian finite simple groups $S$ with $|S|\geq N(w)$, the inequality $\P_w(S)\leq |S|^{d-\eta(w)}$ holds.
\end{conjecture}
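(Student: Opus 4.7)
The plan is to combine the two reductions already completed in the preceding section with an algebro-geometric argument for the one remaining case. More precisely, Theorem \ref{simpleGroupsTheo} already gives a uniform exponential saving $\P_w(S)\leq |S|^{d-\eta}$ for all sufficiently large alternating groups and for all classical simple Lie type groups whose untwisted Lie rank exceeds $72(d+1)^2l^2$. Since there are only finitely many sporadic groups and only finitely many isomorphism types of bounded-rank Lie families, it therefore suffices to prove, for each fixed Lie type $X$ (of bounded rank, classical or exceptional, untwisted or twisted), that $\P_w(X(q))\leq |X(q)|^{d-\eta(w)}$ for all prime powers $q$ larger than some threshold $N(w,X)$, with $\eta(w)>0$ depending only on $w$.

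For fixed $X$ I would mimic the third part of \cite{LS12a}. View $S=X(q)$ as the set of $\IF_q$-rational points of a connected almost-simple algebraic group $\mathbf{G}$ defined over $\IF_p$, and by Lemma \ref{mainLem}(2) restrict attention to the fiber above the identity. Given $\alpha_1,\ldots,\alpha_l\in\Aut(S)$, the fiber arises as the set of $\IF_q$-points of a subscheme $\mathbf{V}\subseteq\mathbf{G}^d$ cut out by the corresponding word equation. If one can show that $\mathbf{V}$ has geometric degree bounded by some $D=D(w)$ and dimension at most $(d-\eta(w))\dim\mathbf{G}$, then the Lang--Weil estimates yield $|\mathbf{V}(\IF_q)|\ll q^{\dim\mathbf{V}}\leq|S|^{d-\eta(w)}$ for $q$ large enough. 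The dimension bound would follow, as in \cite{LS12a}, by analysing the differential of the word map on a regular semisimple locus and exploiting that a nontrivial reduced word is not identically the identity on $\mathbf{G}^d$.

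The hard part, as already flagged in the paper, will be controlling the degree of $\mathbf{V}$ in the presence of field automorphisms. The automorphism group of $X(q)$ is generated by inner-diagonal, graph, and field automorphisms; the first two extend to morphisms of $\mathbf{G}$ of degree bounded in terms of $X$ alone, but a field automorphism $x\mapsto x^{p^k}$ only extends to the $k$-th Frobenius power $\Frob^k$, whose degree as a morphism grows with $q$. Consequently the defining equations of $\mathbf{V}$ may involve polynomials of unbounded degree, and a naive application of Lang--Weil fails. A natural attempt at a fix is to introduce auxiliary variables $h_{j,k}=\Frob^k(g_j)$ for $0\leq k<f$, where $q=p^f$, so that the word equation becomes bounded-degree in the enlarged tuple; the defining relations $h_{j,k+1}=\Frob(h_{j,k})$ then have unbounded degree over $\overline{\IF_p}$ but become trivial on $\IF_q$-points, forcing one either to reason at the level of $\IF_q$-schemes with a twisted form of Lang--Weil, or to invoke Lang's theorem to replace each $\Frob^{k_i}$ by a bounded-degree morphism after a suitable conjugation on $\mathbf{G}(\overline{\IF_p})$. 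Making such a substitution preserve both the dimension estimate and the geometric degree bound, uniformly in $q$ and in the choice of the $\alpha_i$, is what I expect to be the principal technical difficulty.
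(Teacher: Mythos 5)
This statement is not proved in the paper at all: it is stated as Conjecture \ref{larShaConj}, and Section \ref{sec5} explicitly identifies the bounded-rank Lie type case as open, for precisely the reason you run into. Your reduction step is sound and matches what the paper already provides --- Theorem \ref{simpleGroupsTheo} (via Lemma \ref{mainLem}(2) and the reductions of Section \ref{sec3}) covers large alternating groups and classical groups of untwisted rank above $72(d+1)^2l^2$, and sporadic groups plus bounded-rank families of small order can be absorbed into $N(w)$ --- but the remaining assertion, that for each fixed bounded-rank type $X$ one has $\P_w(X(q))\leq|X(q)|^{d-\eta(w)}$ for large $q$, is exactly the unresolved content of the conjecture, and your sketch does not close it.

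The gap is the one you yourself flag, and your proposed fixes do not resolve it. With a field automorphism $x\mapsto x^{p^k}$ in the tuple $(\alpha_1,\ldots,\alpha_l)$, the fiber is cut out by equations whose degree grows with $q$, so Lang--Weil (whose constants depend on the degree of the defining equations) gives nothing uniform in $q$. Introducing auxiliary variables $h_{j,k}=\Frob^k(g_j)$ only relocates the problem: the relations $h_{j,k+1}=\Frob(h_{j,k})$ define, over $\overline{\IF_p}$, a subscheme of unbounded degree (the graph of Frobenius has degree $p$, and iterates have degree $p^k$ with $k$ up to $f$ where $q=p^f$), so neither its dimension nor its degree is controlled independently of $q$, and its $\IF_q$-points are not accessed by a degree-uniform Lang--Weil estimate; moreover the number of auxiliary variables itself grows with $q$, so the ambient space is not even fixed. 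Likewise, Lang's theorem lets you untwist a Frobenius-conjugated form of $\mathbf{G}$, but after any such substitution the word expression still involves genuine Frobenius twists of the variables, so the morphism whose fibers you must bound is still not of bounded degree. Unless you can produce a genuinely new mechanism --- for instance, a dimension/degree bound for ``twisted word varieties'' involving Frobenius, or a character-theoretic replacement for the algebro-geometric step --- what you have written is a correct statement of the strategy and of the obstruction, not a proof; in the paper's own terms, you have re-derived why Conjecture \ref{larShaConj} is plausible and where it is stuck, not established it.
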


Consider also the following slightly stronger version of Conjecture \ref{larShaConj}:

\begin{conjecture}\label{larShaEffConj}
Like Conjecture \ref{larShaConj}, but with the additional assumption that the constants $N(w)$ and $\eta(w)$ are \emph{effective}, i.e., they can be computed algorithmically from the word $w$ as input.
\end{conjecture}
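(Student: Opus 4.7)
The plan is to adapt Larsen and Shalev's proof of \cite[Theorem~1.1]{LS12a} to automorphic word maps in its entirety, along the three-part division recalled in Section~\ref{subsec3P1}, while keeping every constant that arises algorithmically traceable to $w$. The first two parts are already in hand: Theorem~\ref{simpleGroupsTheo}, proved via Theorem~\ref{alternativeTheo}, supplies an explicit bound $\P_w(S)\leq|S|^{d-\eta(w)}$ effectively computable from $w$ for sufficiently large alternating groups and for simple Lie type groups of sufficiently high untwisted rank, and the sporadic groups together with the finitely many remaining alternating groups can be excluded simply by enlarging $N(w)$ past the largest of their orders, an integer visibly computable from $w$. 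Hence the substantive open case is that of simple Lie type groups of bounded untwisted rank $r\leq 72(l+1)^2 l^2$, and the proposal is to complete the proof by an effective algebro-geometric argument in that regime.

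Concretely, for $S=X_r(q)$ with $q=p^f$, I would decompose each $\alpha_i\in\Aut(S)$ as a composition $\conj(U_i)\circ\delta_i\circ\gamma_i\circ\phi_p^{f_i}$ of an inner-diagonal, a graph, and a field-Frobenius part with $f_i\in\{0,1,\ldots,f-1\}$. Fix the combinatorial data $((\delta_i,\gamma_i,f_i))_i$; it ranges over at most $(f\cdot|\Out(S)|)^l$ possibilities, a factor that will eventually be absorbed into $|S|^{o(1)}$ since $f\leq\log_p|S|$ and $|\Out(S)|$ is bounded in terms of $r$. For each fixed choice, view the fiber of the associated automorphic word map as the set of $\IF_q$-rational points of a subscheme $Y\subseteq G^d$ of the ambient reductive group scheme over $\overline{\IF_p}$, cut out by Frobenius-twisted Lang-type equations. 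The plan is to bound the geometric dimension of $Y$ in terms of $w$ and $r$ alone, independently of $q$ and $(f_1,\ldots,f_l)$, and then to invoke effective Lang--Weil estimates to derive a bound $|Y(\IF_q)|\leq C|S|^{d-\eta}$ with $C$ and $\eta$ computable from $w$ and $r$; maximising over the finitely many admissible ranks then yields the conjectured effective constants $N(w)$ and $\eta(w)$.

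The main obstacle, as the authors themselves point out, is precisely this geometric-dimension bound: the naive degree of the defining equations grows like $p^{f_i}$, and the variety $Y$ itself genuinely depends on the Frobenius exponents, so one does not obtain a single bounded-degree family to which the classical algebro-geometric tools apply directly. What is needed is an equidimensionality and fiber-dimension statement for word maps on semisimple algebraic groups when composed with arbitrary powers of Frobenius and with graph automorphisms, uniform in these data and with constants effectively extractable from $w$. This is a noticeably stronger requirement than the equidimensionality input underlying the third case of Larsen and Shalev's proof, and I expect the bulk of the technical work to lie in establishing it, most plausibly by combining Steinberg's structure theory for Frobenius endomorphisms with a Jordan-stratification argument in the spirit of \cite{LS12a}.
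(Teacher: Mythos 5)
There is a fundamental mismatch here: the statement you are asked to prove is labelled a \emph{conjecture} in the paper, and the paper deliberately offers no proof of it. Section \ref{sec5} states explicitly that the extension of the third case of Larsen and Shalev's argument (simple Lie type groups of bounded rank) to automorphic word maps is open, precisely because field automorphisms make the degrees of the defining equations of the fiber varieties unbounded; the paper only proves the implication from Conjecture \ref{larShaEffConj} to Conjecture \ref{radicalConj}. Your text is a research plan, not a proof: it correctly disposes of the cases already covered by Theorem \ref{simpleGroupsTheo} (and correctly notes that sporadic groups and the finitely many small alternating groups are irrelevant once $N(w)$ is enlarged, whereas bounded-rank Lie type groups cannot be excluded this way since $q$ varies), but for the remaining case it reduces the problem to exactly the statement that is open, namely a dimension bound for the subscheme $Y$ cut out by the Frobenius-twisted word equations that is uniform in $q$ and in the exponents $f_1,\ldots,f_l$, with effectively computable constants. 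You acknowledge this yourself ("I expect the bulk of the technical work to lie in establishing it"), so the central step is asserted rather than proved.

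Moreover, the step as sketched is not merely unproven but nontrivially problematic: when $f_i\neq 0$ the coordinates of $\alpha_i(g)$ involve the $p^{f_i}$-power Frobenius, so the family of schemes $Y$ does not have complexity (degree, number of defining equations, number of geometrically irreducible components) bounded in terms of $w$ and $r$ alone, and effective Lang--Weil estimates cannot be invoked off the shelf; a dimension bound alone would not suffice without also controlling the number and field of definition of the top-dimensional components, uniformly in the Frobenius exponents. Some substitute for the equidimensionality input of \cite{LS12a} that is stable under composition with arbitrary Frobenius powers and graph automorphisms, together with a point-counting tool adapted to such twisted varieties (e.g.\ in the spirit of Lang's theorem or Deligne--Lusztig-type fixed-point arguments), would have to be developed and made effective. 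Until that is done, the gap between your outline and a proof of Conjecture \ref{larShaEffConj} is exactly the open problem the paper identifies.
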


Our last goal in this paper is to show that Conjecture \ref{larShaEffConj} implies another interesting statement, given as Conjecture \ref{radicalConj} below. Before this, for the readers' convenience, we briefly review some basic facts on the solvable radical and finite groups with trivial solvable radical (for more details, readers are referred to \cite[pp.~88ff. and p.~122]{Rob96a}), and we give some motivation.

Recall that every finite group $G$ has a largest solvable normal subgroup, called the \emph{solvable radical of $G$} and denoted by $\Rad(G)$. The quotient $G/\Rad(G)$ is \emph{semisimple}, i.e., it has no nontrivial solvable normal subgroups at all. It can be shown that the socle $\Soc(H)$ (the subgroup generated by all the minimal nontrivial normal subgroups) of a finite semisimple group $H$ is isomorphic with a \emph{centerless CR-group}, i.e., a direct product of nonabelian finite simple groups, and that $H$ acts faithfully on $\Soc(H)$ via conjugation, so that $H$ is isomorphic with a subgroup of $\Aut(\Soc(H))$ containing $\Inn(\Soc(H))\cong\Soc(H)$. Conversely, if $R$ is a finite centerless CR-group, and $\Inn(R)\leq G\leq\Aut(R)$, then $G$ is semisimple and $\Soc(G)=\Inn(R)\cong R$. Hence the finite semisimple groups are, up to isomorphism, just those finite groups that occur in between the inner and the full automorphism group of a finite centerless CR-group.

The index $[G:\Rad(G)]$ is clearly an upper bound on the product of the orders of all the nonabelian composition factors of $G$ (counted with multiplicities), so that deriving an upper bound on it means establishing some heavy restrictions on the structure of $G$.

It would be nice if we had an algorithmic method to decide in general for a given reduced word $w$ whether a condition of the form $\p_w(G)\geq\rho$ is always strong enough to imply that $[G:\Rad(G)]$ is bounded in terms of $w$ and $\rho$ or not. This is the case if Conjecture \ref{larShaEffConj} holds true.

\begin{conjecture}\label{radicalConj}
There exists an algorithm which, on input a reduced word $w$, achieves the following:

\begin{itemize}
\item It decides whether there exists a function $g_w:\left(0,1\right]\rightarrow\left[1,\infty\right)$ such that for all finite groups $G$ and all $\rho\in\left(0,1\right]$, if $\p_w(G)\geq\rho$, then $[G:\Rad(G)]\leq g_w(\rho)$.
\item In case such a function $g_w$ exists, it also outputs a definition for a possible choice of $g_w$.
\end{itemize}
\end{conjecture}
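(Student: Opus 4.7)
The plan is to prove Conjecture \ref{radicalConj} assuming Conjecture \ref{larShaEffConj}. I will establish the dichotomy that $g_w$ exists if and only if $\p_w(S) < 1$ for every nonabelian finite simple group $S$. Note first the specialization observation: if $(\beta_1,\ldots,\beta_l)$ witnesses $\p_{w'}(S) = 1$ for some variation $w'$ of $w$, then substituting $h_{k,t} := g_k$ in the corresponding constant identity shows that the same automorphisms make $w_S^{(\beta_1,\ldots,\beta_l)}$ constant, so $\p_w(S) = 1$; conversely $w$ is realized by its own trivial variation. Under Conjecture \ref{larShaEffConj}, $\p_w(S) \leq |S|^{-\eta(w)} < 1$ whenever $|S| \geq N(w)$, so the dichotomy reduces to a finite check: enumerate via the classification of finite simple groups the finitely many nonabelian simple $S$ with $|S| < N(w)$, and for each compute $\p_w(S)$ by brute-force enumeration over the $|\Aut(S)|^l$ tuples of automorphisms.

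For the negative direction, given $S_0$ nonabelian simple with $\p_w(S_0) = 1$, fix $\beta_1,\ldots,\beta_l \in \Aut(S_0)$ such that $w_{S_0}^{(\beta_1,\ldots,\beta_l)}$ is constantly equal to some $g_0$. Setting $G_n := S_0^n$ and $\vec{\alpha_i} := \beta_i^{(n)} \in \Aut(S_0^n)$ (componentwise application), the $j$-th coordinate of $w_{G_n}^{(\vec{\alpha_1},\ldots,\vec{\alpha_l})}(\vec{s_1},\ldots,\vec{s_d})$ computes to $w_{S_0}^{(\beta_1,\ldots,\beta_l)}(s_{1,j},\ldots,s_{d,j}) = g_0$ independently of $j$, so $w_{G_n}^{(\vec{\alpha_1},\ldots,\vec{\alpha_l})}$ is the constant map with value $(g_0,\ldots,g_0)$. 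Hence $\p_w(G_n) = 1 \geq \rho$, and since $\Rad(G_n) = 1$, the index $[G_n : \Rad(G_n)] = |S_0|^n$ is unbounded as $n \to \infty$, refuting the existence of any $g_w$.

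For the positive direction, assume $\p_w(S) < 1$ for every nonabelian simple $S$ and let $G$ satisfy $\p_w(G) \geq \rho$. Apply Lemma \ref{mainLem}(3) with the characteristic subgroup $\Rad(G)$ to obtain $\p_w(G/\Rad(G)) \geq \rho$, so we may assume $G$ is semisimple. Decompose $\Soc(G) = \prod_j T_j$ with $T_j := S_j^{n_j}$ the isotypic components (each characteristic in $G$, since automorphisms permute minimal normal subgroups while preserving isomorphism types) and iterate Lemma \ref{mainLem}(3) to deduce $\rho \leq \prod_j \p_w(T_j)$. Lemma \ref{variationLem} bounds each factor by $\epsilon_{S_j}^{\lceil n_j/l^2\rceil}$ with $\epsilon_S := \max_{w'} \p_{w'}(S) < 1$, and since all factors are at most $1$, each $\epsilon_{S_j}^{\lceil n_j/l^2\rceil} \geq \rho$ holds individually. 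Combining with the effective bound $\epsilon_S \leq |S|^{-\eta_0(w)}$ from Conjecture \ref{larShaEffConj} for $|S| \geq \max_{w'} N(w')$, where $\eta_0(w) := \min_{w'} \eta(w') > 0$, this yields explicit upper bounds on both $|S_j|$ and each multiplicity $n_j$ in terms of $\rho$ and $w$; the classification of finite simple groups then bounds the number of isomorphism types $S_j$, and since $G \hookrightarrow \Aut(\Soc(G))$ for semisimple $G$, one obtains the sought effective function $g_w(\rho)$.

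The main obstacle is the bookkeeping in the positive direction: assembling the simultaneous bounds on $|S_j|$, $n_j$, and the number of isotypic components into an explicit closed-form $g_w(\rho)$ is routine but tedious. The negative direction and the specialization observation are short direct calculations, and the algorithmic content rests entirely on the effectivity hypothesis of Conjecture \ref{larShaEffConj}, with no further ingredients beyond enumeration of small simple groups.
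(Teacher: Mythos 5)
Your proposal is correct and follows essentially the same route as the paper: conditional on Conjecture \ref{larShaEffConj}, reduce the question to the finitely many small simple groups via the effective bound $\p_w(S)\leq|S|^{-\eta(w)}$, refute boundedness via powers $S_0^n$ with componentwise automorphisms when some $\p_w(S_0)=1$, and otherwise bound $[G:\Rad(G)]$ through the isotypic decomposition of $\Soc(G/\Rad(G))$, Lemma \ref{mainLem}(3), Lemma \ref{variationLem}, and the embedding of a semisimple group into the automorphism group of its socle. The only (cosmetic) difference is that you state the dichotomy $\p_w(S)<1$ for all $S$ up front with a $\rho$-independent finite check, while the paper phrases its candidate list in terms of $\rho$; for the small groups your multiplicity bound needs a quantitative $\epsilon_S<1$, which, as in the paper, follows from $\epsilon_S\leq 1-|S|^{-l}$ or direct brute-force computation.
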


\begin{proof}[Proof that Conjecture \ref{larShaEffConj} implies Conjecture \ref{radicalConj}]
Write $\Soc(G/\Rad(G))=S_1^{n_1}\times\cdots\times S_r^{n_r}$, where the $S_i$ are pairwise nonisomorphic nonabelian finite simple groups. Note that each $S_i^{n_i}$ is a characteristic composition factor of $G$, and so $\rho\leq\p_w(G)\leq\p_w(S_i^{n_i})\leq\max_{w'}{\p_{w'}(S_i)}$ for $i=1,\ldots,r$, where $w'$ runs through the variations of $w$.

Compute $N_0(w):=\max_{w'}{N(w')}$ and $\eta_0:=\min_{w'}{\eta(w')}$, and note that necessarily $\max_{i=1,\ldots,r}{|S_i|}\leq\max\{N_0(w),\rho^{-1/\eta_0(w)}\}$, as otherwise, if $|S_i|$ is strictly larger than that maximum, it follows that $\rho\leq\max_{w'}{\p_{w'}(S_i)}\leq|S_i|^{-\eta_0(w)}<(\rho^{-1/\eta_0(w)})^{-\eta_0(w)}=\rho$, a contradiction.

Hence we can effectively reduce the list of nonabelian finite simple groups $S$ that could potentially occur as a factor of $\Soc(G/\Rad(G))$ to a finite number of possibilities. There are two cases to consider:

\begin{enumerate}
\item For one of those finitely many nonabelian finite simple groups $S$, we have $\p_w(S)=1$. In other words, there exist automorphisms $\alpha_1,\ldots,\alpha_l$ of $S$ such that $w_S^{(\alpha_1,\ldots,\alpha_l)}$ is constant on $S^d$. Then it is easy to see that $w_{S^n}^{(\alpha_1^{(n)},\ldots,\alpha_l^{(n)})}$ is constant on $(S^n)^d$, and so $\p_w(S^n)=1$ for all $n\in\IN^+$. Hence in that case, $[G:\Rad(G)]$ cannot be bounded under any of the assumptions $\p_w(G)\geq\rho$, $\rho\in\left(0,1\right]$.
\item For each of these finitely many $S$, $\p_w(S)<1$. Then for every variation $w'$ of $w$, $\p_{w'}(S)<1$ as well, by Remark \ref{variationRem}(3). Hence

\[
\epsilon=\epsilon(S,w):=\max_{w'}{\p_{w'}(S)}\leq 1-1/|S|^l.
\]

Therefore, by Lemma \ref{variationLem}, $\p_w(S^n)\geq\rho$ implies

\[
n\leq n_0(w,\rho):=\lfloor l^2\cdot\log(\rho)/\log(1-1/|S|^l)\rfloor.
\]

It follows that $|\Soc(G/\Rad(G))|$ is effectively bounded from above in terms of $w$ and $\rho$, namely by $\prod_{S}{|S|^{n_0(w,\rho)}}$, where $S$ runs through the nonabelian finite simple groups of order at most $\max\{N_0(w),\rho^{-1/\eta_0(w)}\}$. Since $G/\Rad(G)$ embeds into $\Aut(\Soc(G/\Rad(G)))$, its order is thus also effectively bounded in terms of $w$ and $\rho$; more precisely,

\begin{align*}
|G/\Rad(G)| &\leq|\Aut(\prod_S{|S|^{n_0(w,\rho)}})|=|\prod_S{\Aut(S)\wr\Sym_{n_0(w,\rho)}}| \\
&=\prod_S{(|\Aut(S)|^{n_0(w,\rho)}\cdot n_0(w,\rho)!)}.
\end{align*}
\end{enumerate}

We can thus conclude the proof by noting that it can be effectively decided which of the two cases occurs (just go through the effective finite list of groups $S$ and check for each of them, if necessary by brute force, whether $\p_w(S)=1$).
\end{proof}

\end{document}